\documentclass[a4paper,12pt]{extarticle}
\usepackage{amsmath}
\usepackage{amssymb}
\usepackage{amsthm}

\usepackage{hyperref}
\hypersetup{
    colorlinks,
    citecolor=blue,
    filecolor=blue,
    linkcolor=blue,
    urlcolor=blue
}
\allowdisplaybreaks

\begin{document}
\newtheorem{theor}{Theorem}[section] 
\newtheorem{prop}[theor]{Proposition} 
\newtheorem{cor}[theor]{Corollary}
\newtheorem{lemma}[theor]{Lemma}
\newtheorem{sublem}[theor]{Sublemma}
\newtheorem{defin}[theor]{Definition}
\newtheorem{conj}[theor]{Conjecture}
\theoremstyle{definition}
\newtheorem{Bem}[theor]{Remark}
\newtheorem{ex}[theor]{Example}
\hfuzz2cm

\gdef\mtr#1{#1}
\gdef\ar#1{\widehat{#1}}
\def\covol{{\rm covol}}
\def\odd{{\rm odd}}
\gdef\a{\alpha}
\gdef\mn{{\mu_{m}}}
\gdef\Hom{{\rm Hom}}
\def\Trs{{\rm Tr_s\,}}
\def\Tr{{\rm Tr\,}}
\def\End{{\rm End}}
\def\eq{equivariant }
\def\Td{{\rm Td}}
\def\ch{{\rm ch}}
\def\torus{{\cal T}}
\def\Proj{{\rm Proj}}
\def\uexp#1{{{\rm e}^{#1}}}
\def\Spec{{\rm Spec}\,}
\def\mod{{\rm mod}}
\def\rk{{\rm rk}\,}
\def\phi{\varphi}
\def\deg{{{\rm deg}\,}}
\def\Td{{\rm Td}}
\def\ch{{\rm ch}}
\def\Hom{{\rm Hom}}
\def\Bil{{\rm Bil}}
\def\End{{\rm End}}
\def\Sym{\Sym}
\gdef\endProof{\hfill$\square$\par\bigskip\noindent}
\def\Im{{\rm Im}\,}
\def\Re{{\rm Re}\,}
\def\im{{\rm im}\,}
\def\ker{{\rm ker}\,}
\def\Tr{{\rm Tr}\,}
\def\Eig{{\rm Eig}\,}
\def \x{\times}
\def\Spec{{\rm Spec}\,}
\def \BN{{\mathbb N}}
\def \BZ{{\mathbb Z}}
\def \BQ{{\mathbb Q}}
\def \BR{{\mathbb R}}
\def \BC{{\mathbb C}}
\def \BH{{\mathbb H}}
\def\BP{{\mathbb P}}
\def\Sp{{\bf Sp}}
\def\GL{\mbox{\bf GL}}
\def\SL{\mbox{\bf SL}}
\def\SO{\mbox{\bf SO}}
\def\O{\mbox{\bf O}}
\def\SU{\mbox{\bf SU}}
\def\so{{\mathfrak {so}}}
\def\Aut{\mbox{\rm Aut}}
\def\sign{\mbox{\rm sign\,}}
\def\ord{{\rm ord}}
\def \<{\langle}
\def \>{\rangle}
\def \Ad {{\rm Ad}}
\def \ad {{\rm ad}}
\def\vp{\varphi}
\def\vep{\varepsilon}
\def\theta{\vartheta}
\def\Ric{{\rm Ric}}
\def\vol{{\rm vol}}
\def\diam{{\rm diam}\,}
\def\Pf{{\rm Pf}}
\def\grad{{\rm grad}\,}
\def\eps{\varepsilon}
\def\div{{\rm div\,}}
\def\Sym{{\rm Sym}}
\def\Cas{{\rm Cas}}
\def\nm{{n_{\max}}}

\def\bz{\mbox{\boldmath$\zeta$\unboldmath}}
\def\bzs{\mbox{\boldmath$\zeta'$\unboldmath}}
\def\odd{{\rm odd}}
\author{
Kai K\"ohler\footnote{Mathematisches Institut/
Universit\"atsstr. 1/
Geb\"aude 25.22/
D-40225 D\"usseldorf/
koehler@math.uni-duesseldorf.de}} 
\title{Asymptotics of holomorphic torsion on orbifolds and an equivariant arithmetic Hilbert-Samuel theorem}
\maketitle
\begin{abstract}
We compute the $\log p$-part of the asymptotic expansion of the holomorphic torsion of the $p$-th power of a positive line bundle in terms of characteristic classes. This is done for orbifold and equivariant torsions. In the equivariant case the exponents in the expansion are less than or equal to the dimension of the fixed point set. We deduce a corresponding equivariant arithmetic Hilbert-Samuel theorem in Arakelov geometry.
\end{abstract}
\begin{center}
2020 Mathematics Subject Classification: 58J52, 14G40.
\end{center}
\thispagestyle{empty}
\setcounter{page}{1}

\section{Introduction}

Given a holomorphic vector bundle $E$ equipped with an Hermitian metric $h^E$ on a compact Hermitian manifold, Ray and Singer constructed the holomorphic torsion $T(M,E)\in\BR$ as a regularized determinant of Kodaira-Laplace operators. This definition was  extended to an equivariant setting (\cite{K1}) and to orbifolds by Ma (\cite{Ma1}). For powers $L^p$ of a positive Hermitian line bundle $(L,h^L)$, Bismut and Vasserot (\cite{BV}) determined the first terms of an asymptotic expansion of $T(M,E\otimes L^p)$ as $p\to+\infty$. Other extensions and applications to physics of asymptotics of holomorphic torsion have been given by Berman (\cite{Ber}), Su (\cite{Su}), Larra\'in-Hubach (\cite{La}) and, for torsion forms, by Puchol (\cite{Puchol}).
Finski (\cite{Fi}) proved in the orbifold situation that this expansion has two kinds of terms, of the form $p^\ell$ and $p^\ell\log p$ for $-\infty<\ell\leq\dim M$. Also he showed in \cite[Th.\ 1.5]{Fi} that the coefficient of the $p^\ell\log p$-terms had to be independent of the metrics on $M$, $E$ and $L$. The first result of this article is the computation of the $p^\ell\log p$-part of the asymptotic expansion in terms of explicitly given characteristic classes.

Let $\BZ/(m)$ denote the cyclic group of order $m\in\BZ^+$. For each $m$ we fix a primitive $m$-th root of unity $\zeta_m$. For a compact effective orbifold $M$ let $M\dot\cup \Sigma M=\dot\bigcup_{j\in J} M_j$ denote the decomposition of $M$ and its strata $\Sigma M$ into connected components and let $m_j$ be the multiplicity of $M_j$.
\begin{theor}\label{AsympOrbi}
Let $M$ be a compact effective Hermitian orbifold of dimension $n$ and let $(L,h^L)$, $(E,h^E)\to M$ be proper Hermitian orbifold vector bundles with $\rk L=1$ and $L$ positive. Let $\nm:=\max_{M_j\subset M_{\rm sing}}M_j$ denote the maximum of the dimensions of singular strata.  Then there is a number $m\in\BZ^+$ and families $\tilde\beta_\bullet:\BN_0\to\BR$, $\tilde\kappa_\bullet:\BZ/(m)\x\BN_0\to\BR$ such that for any $k\in\BN_0$ the orbifold holomorphic torsion satisfies for $p\to+\infty$
\begin{eqnarray*}
T(M,E\otimes L^p)&=&\log p\cdot\sum_j\frac1{m_j}\int_{M_j}\left(n\cdot \Td^\Sigma(TM)-\Td^{'\Sigma}(TM)\right)\ch^\Sigma(E\otimes L^p)
\\&&+\sum_{j=0}^kp^{n-j}\tilde\beta_j+\sum_{j=0}^{k+\nm-n}\sum_{r\in\BZ/(m)}p^{\nm-j}\zeta_m^{r p}\tilde\kappa_{r,j}+o(p^{n-k}).
\end{eqnarray*}
\end{theor}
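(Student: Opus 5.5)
Taking Finski's result as given, I plan to reduce the theorem to a computation that holds for all $p$ at once. By \cite{Fi}, $T(M,E\otimes L^p)$ already has an asymptotic expansion into terms $p^\ell$ and $p^\ell\log p$. The oscillating parts carry factors $\zeta_m^{rp}$, which come from the singular strata and have exponents at most $\nm$. Finski also proved that the coefficients of the $\log p$-terms do not depend on the metrics $h^{TM},h^E,h^L$. So the task is to identify the $\log p$-coefficients. The remaining terms are absorbed into $\tilde\beta_j$ and $\tilde\kappa_{r,j}$.

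\textbf{Step 1: isolate the $\log p$-part by rescaling.} Replace the metric $h^L$ by $c\,h^L$ with $c>0$ constant. This changes $h^{L^p}$ by $c^p$, and the torsion changes by the anomaly formula for orbifold torsion (Ma's extension of Bismut–Gillet–Soulé). That formula expresses $T(M,E\otimes L^p,c h^L)-T(M,E\otimes L^p,h^L)$ through two quantities:
\begin{itemize}
\item the Quillen anomaly of the $L^2$-metric on $\det H^\bullet$, which for large $p$ concentrates in degree $0$;
\item the integral over the strata $M_j$, weighted by $1/m_j$, of $\Td^\Sigma(TM)\,\widetilde{\ch}^\Sigma$, which is proportional to $p\log c$ times characteristic numbers.
\end{itemize}
This dependence is in $\log c$, not $\log p$. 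It does not give the answer directly, but it gives the consistency check that $\tilde\beta,\tilde\kappa$ may depend on metrics while the $\log p$-coefficient may not.

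The actual determination will instead use the scaling behaviour of the operator. Rescale the Kähler form $\omega\mapsto p\,\omega$, which is natural since $c_1(L^p)=p\,c_1(L)$. Then the Laplacian $\square_p$ for the metric $g^{TM}$ equals $p$ times the Laplacian for $p\,g^{TM}$, up to the action on forms.

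\textbf{Step 2: the scaling identity.} For a zeta-regularized determinant, scaling an operator by $p$ gives
\[
\zeta_q'(0)\mapsto \zeta_q'(0)+\log p\cdot\zeta_q(0).
\]
Hence the torsion for $(g,h^L)$ differs from the torsion for $(p g,h^L)$ by
\[
\log p\cdot\sum_q(-1)^q q\,\zeta_q(0),
\]
corrected by the dimensions of the kernels. The value $\zeta_q(0)$ is the constant heat coefficient minus $\dim\ker$, and on orbifolds the constant heat coefficient is a sum over strata of local integrals, with the standard $1/m_j$ weights. Combining the sum over $q$ with the weights $(-1)^q q$ uses the derivative $\frac{d}{db}\big|_{b=0}$ of $\sum_q (-1)^q b^q\,\Lambda^q$. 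This produces $\Td'$ in the normalization of Bismut and Köhler, namely
\[
\frac{\partial}{\partial b}\Big|_{b=0}\Td(TM)\,\ch(\Lambda^\bullet(bT^*M)).
\]
The kernel terms add $n\cdot\Td\,\ch$ through the orbifold Riemann–Roch–Kawasaki formula: by Kodaira vanishing only $H^0$ survives, and the $\log$ of the $L^2$-metric scale contributes $n\chi$. The two pieces together give
\[
\log p\sum_j\frac1{m_j}\int_{M_j}\bigl(n\Td^\Sigma-\Td'^\Sigma\bigr)\ch^\Sigma(E\otimes L^p).
\]

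\textbf{Step 3: apply the asymptotics to the rescaled metric.} The torsion for $p\,g^{TM}$ together with $L^p$ is exactly the situation analysed by the Bismut–Vasserot/Finski localization. Near each stratum, the rescaled Laplacian converges to a model harmonic oscillator twisted by the finite isotropy group. I will argue that this torsion has an expansion containing pure powers and oscillating terms, and no $\log p$. The reason is that after rescaling, the heat kernel expansion in $t$ is uniform in $p$, and the $\log p$ in Finski's work arises solely from the $\zeta_q(0)$-scaling just computed. Since $\ch^\Sigma(L^p)$ on a stratum $M_j$ is a polynomial in $p$ of degree $\dim M_j$ times $\zeta_m^{rp}$, the claimed ranges of exponents follow.

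\textbf{Main obstacle.} The hardest part is Step 3: proving that the torsion for the rescaled metric $p\,g^{TM}$ has no $\log p$-term. This requires a uniform off-diagonal estimate and localization near singular strata, with the spectral gap $\sim p$ rescaled to order $1$. My first attempt would be to take Finski's proof and track where $\log p$ appears: it comes from splitting the Mellin integral at $t=1/p$. I would then verify that with the scaled metric this splitting point becomes $t=1$, so the logarithm disappears.
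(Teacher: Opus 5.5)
Your architecture matches the paper's. Since $\square_p$ for $g^{TM}$ is exactly $p$ times the Kodaira Laplacian for $p\,g^{TM}$, the paper's $\tilde Z_p(s)=p^{s-n}Z_p(s)$ is $p^{-n}$ times the torsion zeta function of the rescaled metric, and $Z_p'(0)=-p^n\log p\cdot\tilde Z_p(0)+p^n\tilde Z_p'(0)$ is your Step 2 identity. What you call the main obstacle needs no new argument. The expansion of $\tilde Z_p'(0)$ without $\log p$-terms, i.e.\ of $p^{-n}T(M,E\otimes L^p)$ for the metric $p\,g^{TM}$, is exactly \cite[Th.\ 5.18]{Fi}, which the paper simply cites. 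It cannot be extracted from Finski's Th.\ 1.5 plus metric independence of the $\log p$-coefficients, since that would presuppose the formula you are proving.

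The genuine gap is in Step 2, where you identify the coefficient $-Z_p(0)$ of $\log p$. The kernel correction in $Z_p(0)$ is $\sum_q(-1)^qq\dim H^q(M,E\otimes L^p)$. This vanishes for $p\gg0$: only $H^0$ survives, and the number operator gives it weight $0$. So nothing enters through kernels, and the whole coefficient is the constant term $C_0$ of the small-time expansion of $\Trs Ne^{-u\square_p}$.

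The identity $C_0=\sum_j\frac1{m_j}\int_{M_j}(n\Td^\Sigma(TM)-\Td^{'\Sigma}(TM))\ch^\Sigma(E\otimes L^p)$, including the $n\Td^\Sigma$-part, is the Bismut--Gillet--Soul\'e computation in Ma's orbifold form \cite[Th.\ 2.4, Eq.\ (2.24)]{Ma1}, and it uses the K\"ahler condition. Inserting $N$ is not covered by the local index theorem, and no formal derivative of $\sum_q(-1)^qb^q\Lambda^q$ produces it. Your split ``$\Td'$ from the heat coefficients, $n\Td$ from the kernels'' therefore consists of two compensating errors: the final formula is right, but neither attribution is.

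What you seem to have in mind is a different, valid argument: apply Ma's anomaly formula \cite[Th.\ 0.1]{Ma1} to the K\"ahler metrics $g^{TM}$ and $p\,g^{TM}$. Then:
\begin{itemize}
\item the Bott--Chern class is $\widetilde\Td^\Sigma(TM,g^{TM},p\,g^{TM})=-\log p\cdot\Td^{'\Sigma}(TM)$;
\item the $L^2$-metric on $\lambda_p$ is multiplied by $p^{-n\dim H^0}$;
\item Kawasaki's Riemann--Roch formula turns $n\log p\cdot\dim H^0$ into $n\log p\sum_j\frac1{m_j}\int_{M_j}\Td^\Sigma(TM)\ch^\Sigma(E\otimes L^p)$.
\end{itemize}
Together these give that $T(g^{TM})-T(p\,g^{TM})$ equals the claimed $\log p$-term. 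This is the Bott--Chern reading the paper records after the proof of Th.\ \ref{AsympEquiv}.

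In either version you must first pass to a K\"ahler metric; the paper uses the one induced by $c_1(L)$. You then use the metric independence of $\tilde Z_p(0)$ \cite[p.\ 44]{Fi} to reach arbitrary Hermitian metrics. Make that reduction explicit.
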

In particular it turns out that $p^\ell\log p$-part equals $\log p$ times a polynomial in $p$. In the special case where $M$ is a manifold the theorem shows that the coefficient of $\log p$ of the asymptotic expansion of the classical non-orbifold holomorphic torsion is given by
$$
\int_{M}(n\cdot  \Td(TM)-\Td'(TM))\ch(E\otimes L^p).
$$
From Th.\ \ref{AsympOrbi}, we deduce the analogous result for the equivariant torsion $T_g(M,E\otimes L^p)$ in the case of an action of finite order of an isometry $g$ on $M, E$ and $L$. In the equivariant case, however, the largest exponent of $p$ is given by the maximum $\nm$ of the dimensions of fixed point submanifolds instead of by $\dim M$.
\begin{theor}\label{AsympEquiv}
Let $M$ be a compact Hermitian manifold of dimension $n$ and let $(L,h^L),(E,h^E)\to M$ be Hermitian vector bundles with $\rk L=1$ and $L$ positive. Assume that $M$, $E$ and $L$ are invariant under the action of an holomorphic isometry $g$ of finite order $m$ and that $E$ and $L$ are equipped with a $g$-equivariant structure. Let $\nm$ denote the largest dimension of the components of the fixed point submanifold $M^g$. Then there is a family $\kappa_\bullet':\BZ/(m)\x\BN_0\to\BR[\zeta_m]$ such that for any $k\in\BN_0$ the equivariant holomorphic torsion satisfies for $p\to+\infty$
\begin{eqnarray*}
T_g(M,E\otimes L^p)
&=&\log p\cdot\int_{M^g}(n\cdot \Td_g(TM)-\Td_g'(TM))\ch_g(E\otimes L^p)
\\&&+\sum_{j=0}^kp^{\nm-j}\sum_{r\in\BZ/(m)}\zeta_m^{r p}\kappa_{r,j}'+o(p^{\nm-k}).
\end{eqnarray*}
\end{theor}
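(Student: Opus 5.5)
We plan to deduce Theorem \ref{AsympEquiv} from Theorem \ref{AsympOrbi} by passing to quotient orbifolds and using a character decomposition. Let $G=\<g\>\cong\BZ/(m)$ act on $M$, $E$, $L$. For a character $\chi$ of $G$, the $\chi$-isotypic part of the Dolbeault complex of $E\otimes L^p$ is the Dolbeault complex of an orbifold bundle on $M/G$. Write $T_h$ for the equivariant torsion, so that $T_h=\sum_\chi \chi(h)\,T^\chi$. Discrete Fourier inversion gives
$$T^\chi=\frac1m\sum_{h\in G}\overline{\chi(h)}\,T_h .$$
It follows that
$$T_g(M,E\otimes L^p)=\sum_\chi \chi(g)\,T(M/G,(E\otimes\chi^{-1})\otimes L^p),$$
where each summand is an orbifold torsion. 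The bundles $E\otimes\chi^{-1}$ are proper orbifold bundles on $M/G$, at least after replacing $L$ by a power if necessary to make it proper. We handle that reduction by writing $p=p_0 q+s$ and absorbing $L^s$ into $E$. This produces the periodic factors $\zeta_m^{rp}$.

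The next step is to apply Theorem \ref{AsympOrbi} to each summand and to sum. We first treat the $\log p$ term. The strata of $M/G$ are images of the fixed point sets $M^h$, $h\in G$. The orbifold classes $\Td^\Sigma$ and $\ch^\Sigma$ on the stratum associated with $h$ are the equivariant classes $\Td_h$ and $\ch_h$ on $M^h$. The factor $1/m_j$ together with the sum over $\chi$ then gives
$$\sum_\chi\chi(g)\,\frac1m\sum_h\overline{\chi(h)}\int_{M^h}(\cdots)_h .$$
By orthogonality, $\sum_\chi\chi(g)\overline{\chi(h)}=m\,\delta_{g,h}$. Only $h=g$ survives, and the coefficient is exactly
$$\int_{M^g}\bigl(n\Td_g(TM)-\Td_g'(TM)\bigr)\ch_g(E\otimes L^p).$$

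The main obstacle is the remaining terms. Theorem \ref{AsympOrbi} gives powers up to $p^n$, while the claimed expansion starts only at $p^{\nm}$ with $\nm=\dim M^g$. The same orthogonality must cancel all contributions not localized on $M^g$. This needs more than the bare statement of Theorem \ref{AsympOrbi}: the coefficients $\tilde\beta_j$ and $\tilde\kappa_{r,j}$ must themselves be sums of local contributions over strata, i.e.\ over $h$, which is how Finski's expansion arises. Given this, we would argue as follows.

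\begin{itemize}
\item The contribution of $M^h$ depends on $\chi$ only through $\overline{\chi(h)}$ times a universal local quantity, because the twist by $\chi^{-1}$ acts by the scalar $\chi(h)^{-1}$ on the fixed locus.
\item Orthogonality then kills every $h\neq g$.
\item The surviving $h=g$ terms carry powers at most $p^{\dim M^g}$, with coefficients in $\BR[\zeta_m]$ times $\zeta_m^{rp}$.
\end{itemize}

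An alternative that avoids opening Theorem \ref{AsympOrbi} is to invert the relation directly. Apply Theorem \ref{AsympOrbi} to $M/\<g^d\>$ for all divisors $d$ of $m$. Then argue by induction on the order, using the previous display to express $T_g$ and comparing the leading orders. If neither route controls the top orders, we would fall back on the heat kernel localization of Bismut--Vasserot for $T_g$, which concentrates near $M^g$ and gives the $p^{\nm}$ bound directly.
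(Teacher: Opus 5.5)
This is essentially the paper's proof. Lemma \ref{finiteFourier} writes $T_g$ as a character-weighted sum of orbifold torsions on $M/\Gamma$ twisted by flat bundles $F_\rho$, Theorem \ref{AsympOrbi} with Schur orthogonality gives the $\log p$ term on $M^g$, and the other terms cancel because Finski's coefficients are $\chi_\rho(h)$ times $\rho$-independent local quantities. The paper spells out that cancellation only for the regular part, via $\sum_\rho\overline{\chi_\rho(g)}\rk F_\rho=0$; your stratum-by-stratum version is what actually lowers the exponents from the largest singular stratum of $M/\Gamma$ to $\dim M^g$. Your detour through $p=p_0q+s$ is unnecessary: for an effective global quotient every equivariant bundle, including $L$, already descends to a proper orbifold bundle, and the factors $\zeta_m^{rp}$ come directly from Theorem \ref{AsympOrbi} and $\ch_g(L^p)$.
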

This formula has been proven in the case $M$ symmetric, $E$ trivial, $g\in G$ arbitrary by \cite[Th.\ 5.5]{K4} and for $M$ homogeneous, $\dim M^g=0$ by \cite[Th.\ 8.1]{K4}. Also, the main result \cite[Th.\ 1]{T} of an as of yet unpublished preprint by Te\ss mer states that for $p\to+\infty$ the part of differential form degree $2k$ of equivariant torsion forms is given by a linear combination of $p^{\nm+k}$, $p^{\nm+k}\log p$ and coefficients as in Th.\ \ref{AsympEquiv} up to $o(p^{\nm+k})$.

We also deduce corresponding anomaly formulae for the asymptotic expansion of $L^2$-metrics on the Knudsen-Mumford determinant of cohomology in Cor.\ \ref{BV-L2-Orbi} and Cor.\ \ref{BV-L2-equiv}.

The main application of holomorphic torsion is the construction of the determinant of a direct image of Hermitian vector bundles within the context of Arakelov geometry. Similarly to the role played by the Kodaira vanishing theorem in algebraic geometry, describing the asymptotic behaviour of holomorphic torsion for increasing tensor powers of positive line bundles $L\to M$ yields arithmetic Hilbert-Samuel theorems in Arakelov geometry. The first result of this kind has been given by Gillet and Soul\'e in \cite[Th.\ 8]{GS8} using Bismut-Vasserot's theorem.
Since then, numerous articles have been published that extend and apply this result, in particular by Abbes and Bouche \cite{ABo}, S.-W. Zhang \cite{Z}, Randriambololona \cite{R}, Yuan \cite{Y}, Moriwaki \cite{Mo}, Berman and Freixas i Montplet \cite{BeFr}, Botero, Burgos Gil, Holmes and de Jong \cite{BoBuHoJo} and Chen and Moriwaki \cite{CMo}.

We state the result for an equivariant arithmetic variety $X$, i.e. a regular integral scheme which is endowed with a $\mn$-projective action, projective and flat over $\Spec\BZ$. Let $\ar{\rm deg}(V,h^V)\in\BR$ for a finitely generated Hermitian $\BZ$-module $(V,h^V)$ denote Gillet-Soul\'e's arithmetic degree. We equip $H^\bullet(X(\BC),E_\BC\otimes L_\BC^p)$ with the $L^2$-metric induced by the Hodge embedding.
\begin{theor}\label{eqHilbert-Samuel}
Let $f:X\to\Spec\BZ$ be a flat projective arithmetic variety of dimension $n+1$, equipped with a K\"ahler metric $g^{TX}$ over $X(\BC)$. Let $(L,h^L),(E,h^E)\to X$ be Hermitian vector bundles with $\rk L=1$ and $L$ positive in the sense that $H^q(X,E\otimes L^p)=0$ for $q>0$ and $p>>0$ and that $c_1(L_\BC,h^L)>0$. Assume that $X$, $E$ and $L$ carry a $\mu_m$ action for $m\in\BZ^+$ and that the metrics are invariant under this action. Let $\nm+1$ denote the largest dimension of the components of the fixed point scheme $X^{\mu_m}$ and let $N$ denote the normal bundle for the embedding $X^\mn\hookrightarrow X$. Then there is a family $\hat\kappa_\bullet:\BZ/(m)\x\BN_0\to\BR[\zeta_m]$ such that for $p\to+\infty$ the following identity of complex numbers holds:
\begin{eqnarray*}
\lefteqn{
-\sum_{k\in{\BZ}/(m)}{\zeta_m^k}\cdot
\log\covol\,({H^0(X,E\otimes L^p)}_k,|\cdot|_{L^2})
}\hspace{2cm}\\&=&
\ar\deg f^\mn_*\bigg(\frac{p^{n-\rk N+1}}{(n-\rk N+1)!}\frac{\sum_{\ell\in\BZ/(m)}\zeta_m^{\ell}\rk E_{\ell}}{\prod_{r\in\BZ/(m)}\left(1-\zeta_m^{-r}\right)^{\rk N_r}}
\sum_{k\in\BZ/(m)}\zeta_m^{pk}\ar c_1(L_k,h^L)^{n-\rk N+1}
\bigg)
\\&&+\log p\cdot\int_{X^{\mu_m}(\BC)}(n\cdot \Td_g(TX_\BC)-\Td_g'(TX_\BC))\ch_g(E_\BC\otimes L_\BC^p)
\\&&+\sum_{j=0}^kp^{\nm-j}\sum_{r\in\BZ/(m)}\zeta_m^{r p}\hat\kappa_{r,j}+o(p^{\nm-k}),
\end{eqnarray*}
where the coefficients $\hat\kappa_{r,j}$ for $j>\nm$ depend only on the complex points and the metrics.
\end{theor}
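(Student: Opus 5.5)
We derive the statement from the equivariant arithmetic Riemann--Roch theorem of the author and Roessler, applied to $f:X\to\Spec\BZ$ and the $\mu_m$-equivariant Hermitian bundle $E\otimes L^p$, and then insert the torsion asymptotics of Th.\ \ref{AsympEquiv}.

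\textbf{Step 1: equivariant arithmetic Riemann--Roch.} Since $H^q(X,E\otimes L^p)=0$ for $q>0$ and $p\gg0$, the equivariant determinant of cohomology reduces to $H^0(X,E\otimes L^p)$. We equip it with the Quillen metric, which is the $L^2$-metric corrected by the equivariant torsion $T_g(X(\BC),E_\BC\otimes L_\BC^p)$. Taking the equivariant arithmetic degree and decomposing $H^0$ into the eigenspaces $H^0_k$ gives the left-hand side minus (up to the sign and factor conventions of the Quillen metric) $T_g$. Riemann--Roch identifies this with
\[
\ar\deg f^\mn_*\Bigl(\ar\ch_g(E\otimes L^p)\,\ar\Td_g(TX)\,(1-R_g(N))\Bigr),
\]
where $R_g$ is the equivariant $R$-genus and $\ar\Td_g(TX)$ involves $\ar\Td(TX^{\mu_m})\,\ar\Td_g^{-1}(N)$. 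Here the Hermitian metric on $TX^{\mu_m}$ and on $N$ is the one induced by $g^{TX}$ over $X(\BC)$.

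\textbf{Step 2: expansion in $p$.} On $X^{\mu_m}$, the bundle $L$ restricts to a sum of eigenbundles with $g$ acting by $\zeta_m^k$ on $L_k$. Hence
\[
\ar\ch_g(L^p)=\sum_k\zeta_m^{pk}\exp\bigl(p\,\ar c_1(L_k,h^L)\bigr).
\]
Since the relevant component of $X^{\mu_m}$ has dimension $n-\rk N+1$, the arithmetic degree pairs only against arithmetic classes of degree $n-\rk N+2$. So the top power of $p$ is $p^{n-\rk N+1}$, with coefficient $\ar c_1(L_k)^{n-\rk N+1}/(n-\rk N+1)!$. In this term the other factors enter only through their degree-zero parts:
\begin{itemize}
\item $\ch_g(E)$ contributes $\sum_\ell\zeta_m^\ell\rk E_\ell$;
\item $\Td_g^{-1}(N)$ contributes $\prod_r(1-\zeta_m^{-r})^{-\rk N_r}$;
\item $\Td(TX^{\mu_m})$ and $1-R_g$ contribute $1$.
\end{itemize}
This gives the displayed leading term. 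On components of smaller dimension, and for the subleading terms, the contributions have the form $p^{\nm-j}\sum_r\zeta_m^{rp}\cdot(\text{constant})$. These are absorbed into $\hat\kappa_{r,j}$; the $\zeta_m^{rp}$ dependence comes from the eigenvalues of $L$ on each fixed component.

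\textbf{Step 3: torsion term.} Substituting Th.\ \ref{AsympEquiv} produces the $\log p$ term. It also contributes further terms of the form $p^{\nm-j}\zeta_m^{rp}\kappa'_{r,j}$, which are added to $\hat\kappa$. The arithmetic part is a polynomial in $p$ of degree at most $\nm+1$ with nonnegative exponents. Therefore, for $j>\nm$ (negative exponents $\nm-j$), the coefficients $\hat\kappa_{r,j}$ come only from $\kappa'_{r,j}$, and these depend only on the complex points and the metrics.

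\textbf{Main obstacle.} I expect the difficult step to be the bookkeeping of sign and normalization conventions. Specifically:
\begin{itemize}
\item matching the factor $\frac12$ and the sign of the torsion in the Quillen metric;
\item matching the orientation of $\zeta_m^{\pm r}$ in $\Td_g(N)$;
\item handling the $\mu_m$-projective action on $L$ restricted to $X^{\mu_m}$;
\item checking that the lower-dimensional fixed components contribute only in the allowed form $p^{\nm-j}\zeta_m^{rp}$.
\end{itemize}
I would verify these against the conventions of the equivariant arithmetic Riemann--Roch theorem.
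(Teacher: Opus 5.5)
Your plan is the paper's proof. You apply Th.~\ref{lrr1} to $E\otimes L^p$ and discard $H^{q>0}$. You read off the $p^{n-\rk N+1}$ term from the degree-zero parts of $\ar\Td_\mn$ and $\ar\ch_\mn(E)$. You note that the $R_g$-integral and the remaining arithmetic terms are polynomials in $p$ of degree at most $\nm$ with coefficients $\zeta_m^{pk}$. Finally you insert Th.~\ref{AsympEquiv}. Your Step~3 argument for $j>\nm$ spells out what the paper leaves implicit.

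Two points still need closing.

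\textbf{Torsion in $H^0$.} Th.~\ref{lrr1} involves $\covol(H^0_k)/\#H^0_{k,{\rm Tors}}$, so vanishing of higher cohomology is not enough. You also need $H^0(X,E\otimes L^p)$ to be torsion free. This holds because $f$ is flat and $E\otimes L^p$ is locally free; the paper says so explicitly, but your Step~1 never addresses it.

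\textbf{The factor $\frac12$.} The factor you listed as an obstacle cannot be fixed by matching conventions. Th.~\ref{lrr1} contains $\frac12T_g(X(\BC),E_\BC\otimes L_\BC^p)$, so substituting Th.~\ref{AsympEquiv} literally yields $\frac12\log p\cdot\int_{X^{\mu_m}(\BC)}(n\cdot\Td_g(TX_\BC)-\Td_g'(TX_\BC))\ch_g(E_\BC\otimes L_\BC^p)$. The paper's proof makes exactly this substitution and then records the $\log p$ term without the $\frac12$. A careful check will therefore confirm the halved coefficient rather than reproduce the displayed one.

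There are also some smaller slips that do not affect the structure.
\begin{itemize}
\item The $R$-term in Th.~\ref{lrr1} is $R_g(TX_\BC)$. On the fixed locus this also contains the Gillet--Soul\'e $R$-class of the tangent bundle of the fixed point set, so it is not $R_g(N)$ alone.
\item That term cannot affect the top power of $p$ because it enters through $a(\cdot)$, i.e.\ in arithmetic degree at least one. The reason is not that its form-degree-zero part vanishes; for $g$ acting nontrivially on $N$ that part need not vanish.
\item The normal contribution to $\ar\Td_\mn$ is $\ar\ch_\mn\bigl(\sum_q(-1)^q\Lambda^qN^\vee\bigr)^{-1}$, the analogue of $\Td_g(N)$ rather than of $\Td_g(N)^{-1}$. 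The number $\prod_r(1-\zeta_m^{-r})^{-\rk N_r}$ you extract is nevertheless the right one.
\item On a component of dimension $n-\rk N+1$, the classes surviving $\ar\deg f^\mn_*$ have degree $n-\rk N+1$, not $n-\rk N+2$. Your own next sentence already uses the correct degree.
\end{itemize}
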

Thus a certain weighted logarithm of the covolume of the sections is given by a weighted height of the fixed point scheme, a topological term and and an asymptotic expansion in powers of $p$, where the largest exponent of $p$ is smaller than the dimension $\nm+1$ of the fixed point scheme. The analogue result for arithmetic varieties over general regular arithmetic rings $D$ can be obtained by considering the result above for all complex embedding of $D$, as our contribution concerns the complex points. 
In contrast in previous versions of an arithmetic Hilbert-Samuel theorem the largest exponent is given by $\dim X$, and only the highest  degree term of the $\log p$-summand was known.

{\bf Acknowledgements.} The author is grateful to Siarhei Finski and Xiaonan Ma for valuable discussions and comments.

\section{Effective orbifolds}
Throughout this article, a quotient $\frac{g'}{g}$ of two bilinear forms with $g$ non-degenerate is understood to be the endomorphism $A$ such that $g(A\cdot,\cdot)=g'$. We shall apply metrics $h^E$ on bundles $E$ to subbundles and to bundles constructed from $E$ without changing the notation for the metric. For an element $\omega$ of a graded algebra we denote by $\omega^{[k]}$ the summand in degree $k$.

We consider the same (effective) orbifold setting as in \cite{Kaw1}, \cite[Section 1.1]{Ma1} and \cite[section 5]{Fi}. Let ${\cal M}_s$ denote the category such that the objects are pairs $(G,M)$, where $G$ is a finite group acting effectively on a smooth connected manifold $M$. The morphisms $\Phi:(G,M)\to(G',M')$ shall be families of open embeddings $\vp:M\to M'$ such that the following properties hold:
\begin{enumerate}
\item For every $\vp\in\Phi$ there exists an group monomorphism $\lambda_\vp:G\to G'$ such that $\vp$ is $\lambda_\vp$-equivariant.
\item For all $\phi\in\Phi$, $g'\in G'$ satisfying $g'\cdot(\vp(M))\cap \vp(M)\neq\emptyset$, one finds $g'\in\lambda_\vp(G)$.
\item Any $\vp\in\Phi$ satisfies $\Phi=\{g'\vp|g'\in G'\}$.
\end{enumerate}
\begin{defin}
Consider a paracompact Hausdorff space $M$. Let $\cal U$ be a covering of $M$ by connected open subsets which is \emph{dense}, i.e. for all $U,U'\in\cal U$ and $x\in U\cap U'$ there is a $U''\in\cal U$ such that $x\in U''\subset U\cap U'$.
An \emph{orbifold atlas} $\cal V$ on $M$ is a family of ramified coverings ${\cal V}(U):(G_U,\tilde U)\to U$ for $U\in\cal U$ (the \emph{orbifold charts}) such that $(G_U,\tilde U)\in{\cal M}_s$ and:
\begin{enumerate}
\item For any $U\in\cal U$ one has $U\cong \tilde U/G_U$.
\item For any $U,V\in\cal U$, $U\subset V$ there is a morphism $\vp_{VU}:(G_U,\tilde U)\to(G_V,\tilde V)$ that covers the inclusion $U\subset V$.
\item For any $U,V,W\in\cal U$, $U\subset V\subset W$ the equality $\vp_{WU}=\vp_{WV}\circ\vp_{VU}$ holds.
\end{enumerate}
A pair $(M,{\cal V})$ of $M$ together with an orbifold atlas is called an \emph{(effective) orbifold}. 
\end{defin}
Let $\#S$ denote the cardinality of a set $S$.
\begin{defin}
Consider an orbifold $M$. For $x\in M$ choose a chart $(G_x,\tilde U_x)\to U_x$ such that $x\in\tilde U_x$ is fix under the action of $G_x$. By the above definition $G_x$ is uniquely determined up to isomorphism. The \emph{singular points} of $M$ are $M_{\rm sing}:=\{x\in M|\#G_x>1\}$, the \emph{regular points} are $M_{\rm reg}:=M\setminus M_{\rm sing}$.
\end{defin}
\begin{defin}
An \emph{orbifold vector bundle} $\pi:E\to M$ is an orbifold $E$ together with $G_U,G_U^E$-equivariant local vector bundles $\tilde\pi_U:\tilde E_U\to\tilde U$ for local charts $(G^E_U,\tilde E_U)$, $(G_U,\tilde U)$ of $E$ and $M$, resp., such that the transition maps are equivariant, where $G_U=G^E_U/K^E_U$ with  $K^E_U:=\ker(G^E_U\to{\rm Diffeo}(\tilde U))$.
The vector bundle $E$ is called \emph{proper} if $\#K^E_U=1$.

Let $\widetilde{E^{\rm pr}_U}$ denote the maximal $K^E_U$-invariant subbundle of $\tilde E_U$. Let $E^{\rm pr}$ be the proper orbifold bundle given locally by $(G_U,\widetilde{E^{\rm pr}_U})$.
\end{defin}
Riemannian, complex, orientation structures etc.\ are defined by replacing the category $\cal M$ correspondingly.

A holomorphic (or smooth) section $s:M\to E$ is a map covered locally by holomorphic (or smooth) $G^E_U$-invariant sections $\tilde s_U:\tilde U\to\tilde E_U$. For a form $\alpha$ on $M$ with ${\rm supp}\,\alpha\subset U$ set $\int_M\alpha:=\frac1{\#G_U}\int_{\tilde U}\tilde \alpha_U$. We extend this definition to forms with compact support by linearity.

By \cite[Lemma 5.4.3]{MaM}, \cite[p. 76]{Kaw1}, for any $x\in M$ there is a local chart $\tilde U_x\subset\BR^n\to U_x$ such that $G_x$ acts linearly on $\tilde U_x$. Let $\{[1],[h^1_x],\dots,[h^{\rho_x}_x]\}$ denote the set of conjugacy classes in $G_x$ with representatives $h^j_x\in G_x$. Let $C_{G_x}(h^j_x)$ denote the centralizer of each $h^j_x$ and let $\tilde U^{h^j_x}_x\subset\tilde U_x$ denote the fixed point set of $h^j_x$. Then there is a bijection (\cite[p. 77]{Kaw1}) to a disjoint union of orbifolds
$$
\Big\{
(y,[h^j_y])\Big|y\in U_x,j=1,\dots,\rho_y
\Big\}\stackrel{\cong}\to\dot\bigcup_{j=1}^{\rho_x}\tilde U^{h^j_x}/C_{G_x}(h^j_x).
$$
The \emph{strata of the orbifold} $M$ are the elements of the set
$$
\Sigma M:=\Big\{
(x,[h^j_x])\Big|x\in M,j=1,\dots,\rho_x,x\in M_{\rm sing}
\Big\}.
$$
Set $K^j_x:=\ker\left(C_{G_x}(h^j_x)\to{\rm Diffeo}(U^{h^j_x}_x)\right)$. $\Sigma M$ can be equipped with the orbifold atlas
$$
\Big\{
(C_{G_x}(h^j_x)/K^j_x,\tilde U^{h^j_x}_x)\to \tilde U^{h^j_x}_x/C_{G_x}(h^j_x)
\Big|x\in M,j=1,\dots,\rho_x,x\in M_{\rm sing},U_x\mbox{ chart}
\Big\}.
$$
Let $\dot\bigcup_{j\in J} M_j$ denote the decomposition of $M\dot\cup \Sigma M$ into connected components and set $n_j:=\dim_\BC M_j$. The \emph{multiplicity of} $M_j$ is given by $m_j:=\#K^r_x$ for any $(x,[h^r_x])\in M_j\subset\Sigma M$ and $m_j:=1$ for $M_j\subset M$. This is well-defined, as this integer is locally constant in $x$ (\cite[p. 77]{Kaw1}).

Let ${\mathfrak A}^{p,q}(M,E)$ denote the forms of holomorphic degree $p$ and antiholomorphic degree $q$ with coefficients in $E$ and set ${\mathfrak A}^{p,q}(M):={\mathfrak A}^{p,q}(M,{\cal O})$.
Consider a holomorphic Hermitian vector bundle $E\to M$ over a complex manifold $M$. Let $\nabla^E$ be the associated canonical covariant derivative with curvature $\Omega^E\in{\mathfrak A}^{1,1}(M,\End\, E)$. 
Let $g$ be a biholomorphic map acting on $M$. Assume that $E$ is $g$-invariant as a holomorphic Hermitian bundle and that $E$ is equipped with an equivariant structure $g^E$. The
Hermitian vector bundle $\mtr E$ splits on the fixed point submanifold $M^g$ into a direct sum
$E_{|M^g}=\bigoplus_{\theta\in\BR/(2\pi\BZ)}\mtr E_{\theta}$, where the equivariant structure $g^E$ of $E$
acts on
$\mtr E_\theta$ as $e^{i\theta}$.
\begin{defin}
The \emph{Chern character form} on $M^g$ is defined as
\begin{eqnarray*}
\ch_g(\mtr E,h^E)&:=&\sum_\theta e^{i\theta}\ch(\mtr E_\theta,h^{E_\theta})
:=\sum_\theta e^{i\theta}\ch\left(\frac{-\Omega^{\mtr E_\theta}}{2\pi i}\right)
\\ &=&\Tr g^E+\sum_\theta e^{i\theta}
c_1(\mtr E_\theta,h^{E_\theta})+\sum_\theta e^{i\theta} \left(\frac{1}{2}c_1^2(\mtr E_\theta,h^{E_\theta}) -c_2(\mtr
E_\theta,h^{E_\theta})\right)+\dots
.
\end{eqnarray*}
The \emph{Todd form} of an equivariant vector bundle is defined as
\begin{eqnarray*}
\Td_g(\mtr E,h^E)&:=&\frac{c_{{\rm rk}\,E_0}(\mtr E_0,h^{E_0})}
{\ch_g(\sum_{j=0}^{{\rm rk}\,E} (-1)^j \Lambda^j \mtr E^*,h^{E})}
=\Td\left(\frac{-\Omega^{E_0}}{2\pi i}\right)
\prod_{\theta\notin2\pi\BZ}\frac\Td{c_{\rm top}}\left(\frac{-\Omega^{E_\theta}}{2\pi i}+i\theta\right).
\end{eqnarray*}
Set as in \cite[Def. 2.4]{Bimm}
$$
\Td'_t(\mtr E,h^E):=\frac\partial{\partial \vep}_{|\vep=0}
\Td\Big(\frac{-\Omega^{E_0}}{2\pi i}+\vep{\rm id}_{E_0}\Big)\prod_{\theta\notin2\pi\BZ} \frac{\Td}{c_{\rm top}}\Big(\frac{-\Omega^{E_\theta}}{2\pi i}+i\theta+\vep{\rm id}_{E_\theta}\Big).
$$
These are forms in $\bigoplus_{p=0}^{\dim M}{\mathfrak A}^{p,p}(M)$.
\end{defin}
The associated characteristic classes are denoted by $\ch_g(E),\Td_g(E)$ and $\Td'_g(E)$. 
For
$\zeta\in S^1$ and $s>1$ consider the zeta function
$$ L(\zeta,s)=\sum_{k=1}^\infty \frac{\zeta^k}{k^s}
$$ and its meromorphic continuation to $s\in\BC$. 
Consider the formal power series in $x$
$$
\widetilde R(\zeta,x):=\sum_{n=0}^\infty 
\left(\frac{\partial L}{\partial s}(\zeta,-n)+
L(\zeta,-n)\sum_{j=1}^n\frac{1}{2j}\right)\frac{x^n}{n!}.
$$
\begin{defin} The \emph{Bismut equivariant $R$-class} of an equivariant holomorphic
vector bundle $E$ is
defined as the cohomology class
$$
R_g(E):=\left[
\sum_{\theta}\left(\Tr \widetilde R(e^{i\theta},-\frac{\Omega^{
E_\theta}}{2\pi i})-\Tr \widetilde R(e^{-i\theta},\frac{\Omega^{E_\theta}}{2\pi
i})\right)
\right].
$$
\label{BisRg}
\end{defin}
This generalizes the non-equivariant \emph{Gillet-Soul\'e $R$-class}. 
Given a characteristic class $\Phi$, let
$$\widetilde\Phi(\xi)\in \widetilde{\mathfrak A}(M):=\bigoplus_{p\geq 0}{\mathfrak A}^{p,p}(M)/({\rm im}\,\partial +{\rm im}\,\overline{\partial})$$
be the associated equivariant Bott-Chern secondary class as introduced in 
\cite[Th.\ 3.4]{lrr1}, following the non-equivariant axiomatic definition in \cite[Section f]{BGS1}. Notice though that there are two sign conventions for Bott-Chern classes, one used by Gillet and Soul\'e and one used by Bismut, and in this article we work with Bismut's convention as used in \cite{BGS1}: For any short exact sequence of $g$-equivariant holomorphic vector bundles $\xi:0\to E'\to E\to E''\to0$, equipped with any $g$-invariant Hermitian metrics $h^{E'},h^E,h^{E''}$, the associates Chern-Weil differential forms satisfy
$$
\frac{\bar\partial\partial}{2\pi i}\tilde\Phi(\xi)=\Phi(E,h^E)-\Phi(E'\oplus E'',(h^{E'},h^{E''})).
$$
In particular, for two Hermitian metrics $h^E,h^{'E}$ on a holomorphic vector bundle $E$ the class $\widetilde\Phi$ provides the transgression
$$\frac{\bar\partial\partial}{2\pi i}\widetilde\Phi_g(E,h^E,h^{'E})= \Phi_g(E,h^{'E})-\Phi_g(E,h^{E}).$$
For example, $\widetilde{c_1}(E,h^E,h^{'E})=\log\det\frac{h^E}{h^{'E}}$.

Following Kawasaki (\cite[p.\ 153]{Kaw2}), Ma defines the following characteristic classes for proper orbifold vector bundle $E\to M$ on an orbifold $M$ in \cite[(1.7)]{Ma1}: Let $E$ be represented by local charts $\tilde E_U\to \tilde U$ and consider the characteristic forms above as forms on $\tilde U^g$. Then the orbifold characteristic classes are represented locally by the corresponding $g$-equivariant characteristic forms on each connected component of $\tilde U^g/C_{G_U}(g)$. Hence they are given by
\begin{eqnarray*}
\ch^\Sigma(E)_{|\tilde U^g/C_{G_U}(g)}:=\ch_g(\tilde E_U),\ \Td^\Sigma(E)_{|\tilde U^g/C_{G_U}(g)}:=\Td_g(\tilde E_U),\ \Td^{'\Sigma}(E)_{|\tilde U^g/C_{G_U}(g)}:=\Td_g'(\tilde E_U).
\end{eqnarray*}
Analogously, orbifold versions of Bott-Chern classes are defined (\cite[Eq.\ (1.8)]{Ma1}).

\section{Holomorphic torsion}
Let $M$ be a compact complex orbifold of dimension $n$ equipped with an Hermitian metric with associated K\"ahler form $\omega$. Consider an Hermitian holomorphic orbifold bundle $(E,h^E)\to M$ and the corresponding Dolbeault operator $\bar\partial^E:{\mathfrak A}^{0,q}(M,E)\to {\mathfrak A}^{0,q+1}(M,E)$ acting on antiholomorphic $q$-forms with coefficients in $E$. The space ${\mathfrak A}^{0,q}(M,E)$ is equipped with the $L^2$-metric
$$
(\eta,\eta')_{L^2}:=\int_M\langle\eta_{|x},\eta'_{|x}\rangle\frac{\omega^{^n}}{(2\pi)^nn!}.
$$
In \cite[Section 2.2]{Ma1} Ma extends Ray-Singer's definition of holomorphic torsion to orbifolds: Using the formal adjoint $\bar\partial^{E*}$ with respect to this $L^2$-metric, let $\square^q:=(\bar\partial^{E}+\bar\partial^{E*})^2$ denote the Kodaira-Laplace operator acting on ${\mathfrak A}^{0,q}(M,E)$. By \cite[p. 2214]{Ma1}, $\square^q$ is a self-adjoint elliptic operator and thus the spectrum $\sigma(\square^q)$ on the compactum $M$ is real and discrete. The zeta function
$$
Z^\Sigma(s):=\sum_{q=0}^n(-1)^{q+1}q\sum_{\substack{\lambda\in\sigma(\square^q)\\ \lambda\neq0}}\frac{\dim{\rm Eig}_\lambda(\square^q)}{\lambda^{s}}
$$
is well-defined for ${\rm Re}\,s>>0$ and its meromorphic continuation to the complex plane has a well-defined derivative $T(M,E):=(Z^{\Sigma})'(0)\in\BR$ at $s=0$, the \emph{orbifold holomorphic torsion}.

Now consider an Hermitian holomorphic bundle $(E,h^E)\to M$ over an Hermitian manifold and assume that a holomorphic isometry $g$ acts on $M$, leaves $E$ invariant and that $E$ is equipped with an equivariant structure $g^E$. This induces an action of $g$ on the eigenspaces of $\square^q$ and we define
$$
Z_g(s):=\sum_{q=0}^n(-1)^{q+1}q\sum_{\substack{\lambda\in\sigma(\square^q)\\ \lambda\neq0}}\frac{\Tr g_{|{\rm Eig}_\lambda(\square^q)}}{\lambda^{s}}
$$
for ${\rm Re}\,s>>0$ and the \emph{equivariant holomorphic torsion} as $T_g(M,E):=Z_g'(0)\in\BC$ as in \cite{K1}.
\section{The asymptotic expansion}
Let $E,L, F$ be proper Hermitian holomorphic orbifold bundles over a compact effective Hermitian orbifold $M$. Assume that $F$ is flat and that $L$ is a positive line bundle.
Let $\square_p^q$ denote the Kodaira-Laplace operator acting on antiholomorphic forms of degree $q$ with coefficients in $E\otimes L^p\otimes F$. Let $Z_p$ denote the zeta function defining the torsion $T(M,E\otimes L^p\otimes F)=Z'_p(0)$ of $E\otimes L^p\otimes F$. For $M$ smooth Bismut and Vasserot prove in \cite[Eq.\ (14)]{BV} that there is an asymptotic expansion
\begin{equation}
p^{-n}\Tr\exp\left(
-\frac{t}{p}\square_p^q
\right)=\rk F\cdot\sum_{j=-n}^ka^q_{p,j}t^j+o(t^k),
\end{equation}
where the $a^q_{p,j}$ do not depend on $F$. Using this equation (\cite[Eq.\ (2.19)]{Fi}) and $b_{p,j}:=\sum_{q=0}^n(-1)^qqa^q_{p,j}$, Finski shows in \cite[Th.\ 2.7]{Fi} that
\begin{equation}\label{FinskiSmooth}
Z_p'(0)=\sum_{j=0}^kp^{n-j}(\alpha_j\log p+\beta_j)+o(p^{n-k})
\end{equation}
where $\alpha_j,\beta_j$ are given in terms of $b_{p,j}$ such that they take the form $\rk F$ times a factor which is independent of $F$.
For an orbifold $M$, \cite[Th.\ 5.12]{Fi} extends this result using coefficients $\tilde\alpha_j,\tilde\beta_j$:
\begin{theor}\label{FinskiMain}
(Finski, \cite[Th.\ 1.5]{Fi})
Let $M$ be a compact effective Hermitian orbifold of dimension $n$ and let $(L,h^L),(E,h^E)\to M$ be proper Hermitian orbifold vector bundles with $\rk L=1$ and $L$ positive. Then
\begin{eqnarray*}\label{FinskiMainFml}
\nonumber
\lefteqn{
T(M,E\otimes L^p)
=\sum_{j=0}^kp^{n-j}(\tilde\alpha_{j}(\rho)\log p+\tilde\beta_{j}(\rho))
}\\&&+\sum_r\sum_{j=0}^{k+n_r-n}\frac{p^{n_r-j}}{m_r}e^{ip\theta_r}(\gamma_{r,j}(\rho)\log p+\kappa_{r,j}(\rho))
+o(p^{n-k})
\end{eqnarray*}
where $e^{i\theta_r}$ is the action on $L$ on the stratum $M_r$ (\cite[(5.5)]{Fi}) and $\gamma_{r,j},\kappa_{r,j}$ are integrals over local data (\cite[Th.\ 5.12]{Fi}).
\end{theor}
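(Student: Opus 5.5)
The plan is to follow the Bismut–Vasserot strategy for the smooth case, which gives \eqref{FinskiSmooth}, and to localize the heat trace on the orbifold following Kawasaki and Ma. There are four steps.

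\textbf{Step 1 (spectral gap and Mellin splitting).} By the Bochner–Kodaira–Nakano formula, which holds on each local chart $\tilde U$ and hence on $M$, there is a constant $c>0$ such that $\sigma(\square^q_p)\subset[cp,\infty)$ for $q\geq1$ and $p\gg0$. Consequently only $q\geq1$ enters $Z_p$, no zero eigenvalues need to be removed there, and
$$Z_p(s)=\frac{p^{-s}}{\Gamma(s)}\int_0^\infty t^{s-1}\sum_q(-1)^{q+1}q\,\Tr\exp\Big(-\frac tp\square^q_p\Big)\,dt .$$
Differentiating at $s=0$ gives $Z_p'(0)=-\log p\cdot Z_p(0)+(\text{regularized Mellin integral})$. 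On the piece $t\in[1,\infty)$ the spectral gap gives a bound $O(e^{-ct/2})$ times the dimension, so that piece is controlled once the small-$t$ expansion is known uniformly in $p$. In this way the $\log p$ coefficients $\tilde\alpha_j$ and $\gamma_{r,j}$ come out as the $t^0$-coefficients of the rescaled heat trace, and the $\tilde\beta_j$ and $\kappa_{r,j}$ come out as regularized integrals of the same data.

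\textbf{Step 2 (localization of the heat trace).} Write the trace as $\int_M$ of the orbifold heat kernel. In a linear chart $(G_x,\tilde U_x)$ this is $\frac1{\#G_x}\sum_{h\in G_x}\int_{\tilde U_x}\tr\big[h\,\tilde P_t(h^{-1}y,y)\big]dy$. Finite propagation speed, or off-diagonal Gaussian estimates for $\exp(-\frac tp\square_p)$ at scale $p^{-1/2}$ (Ma–Marinescu), show that the terms with $h\neq1$ localize to a $p^{-1/2}$-neighbourhood of $\tilde U^h$, up to $O(p^{-\infty})$. The $h=1$ term then gives the Bismut–Vasserot expansion $p^{n}\sum_j a_{p,j}t^j$ with integrand local on $M_{\rm reg}$. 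Extending this integrand smoothly over the singular set is harmless, since the singular set has measure zero.

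\textbf{Step 3 (contributions of the strata).} Near a stratum $M_r$, given by $\tilde U^h/C(h)$, rescale the normal coordinates by $\sqrt p$. The action of $h$ on $L^p$ contributes the factor $e^{ip\theta_r}$. Integrating out the $2(n-n_r)$ normal directions against the model harmonic-oscillator kernel twisted by $h$ leaves a prefactor $p^{n_r}$ and an expansion in $p^{-1}$ and $t$ whose coefficients are local integrals over $M_r$. Passing from $\frac1{\#G_x}\sum_h$ to integrals over $\tilde U^h/C(h)$ produces the weight $\frac1{m_r}$ through the kernel $K^r$. Feeding these expansions into Step 1 yields the terms $\frac{p^{n_r-j}}{m_r}e^{ip\theta_r}(\gamma_{r,j}\log p+\kappa_{r,j})$.

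\textbf{Step 4 (bookkeeping).} Collect the truncations so that the error is $o(p^{n-k})$; this is why the sums over $r$ run only up to $j=k+n_r-n$.

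\textbf{Main obstacle.} I expect the hardest part to be the uniformity needed in Step 3. The normal-direction asymptotics must be uniform in $t\in(0,1]$ and in $p$, simultaneously with the small-$t$ expansion, and they must remain valid near points where strata meet, where $G_x$ changes. To handle this I would first prove weighted Sobolev estimates for the rescaled operator $\frac1p\square_p$ in each linear chart, in the spirit of Ma–Marinescu's orbifold Bergman kernel analysis. I would then use a partition of unity adapted to the stratification, so that each $h$-twisted term is handled on a chart where $\tilde U^h$ is a linear subspace.
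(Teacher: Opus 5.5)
The paper gives no proof of this statement; it quotes it from Finski \cite[Th.\ 1.5, Th.\ 5.12]{Fi}. Your overall architecture is essentially Finski's: Mellin splitting at $t=1$, localization of the $h\neq1$ terms near $\tilde U^h$, $\sqrt p$-rescaling of the normal directions, and the phases $e^{ip\theta_r}$ with weights $1/m_r$. Your treatment of the $\log p$ part is also correct: its coefficient is $-Z_p(0)$, the local $t^0$ heat coefficient, which is a polynomial in $p$ with phases, and this is exactly what the paper uses in the proof of Th.\ \ref{AsympOrbi}. The genuine gap is the range $t\in[1,\infty)$. The spectral gap only gives
\[
\Big|\sum_q(-1)^{q+1}q\,\Tr\exp\Big(-\frac tp\square^q_p\Big)\Big|\le Cp^n e^{-ct},\qquad t\geq1,
\]
so $\int_1^\infty t^{-1}(\cdots)\,dt$ is $O(p^n)$, the same order as the leading term. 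It is therefore not an error term but contributes to every $\tilde\beta_j$ and $\kappa_{r,j}$. Already Bismut--Vasserot's leading non-log coefficient comes from the zeta-regularized determinant of the model harmonic oscillator, which uses all $t$. A small-$t$ expansion, however uniform in $p$, says nothing about how this piece depends on $p$ beyond its size. Fixed-$t$ expansions in $p$ (as in Ma--Marinescu) are uniform only for $t$ in compact sets, so they cannot be integrated against $dt/t$ on $[1,\infty)$ with an $o(p^{-k})$ remainder.

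What you need is an expansion
\[
p^{-n}\Trs\Big[N e^{-\frac tp\square_p}\Big]=\sum_{j=0}^k b_j(t)\,p^{-j}+\sum_r\sum_{j=0}^{k+n_r-n}e^{ip\theta_r}\,b_{r,j}(t)\,p^{n_r-n-j}+R_{p,k}(t),
\]
valid for all $t\geq1$, with $|b_j(t)|+|b_{r,j}(t)|\le Ce^{-ct}$ and $|R_{p,k}(t)|\le Cp^{-k-1}e^{-ct}$ uniformly in $p$. This uniform large-time estimate is the central technical input of Finski's argument, and your plan has no mechanism for it. The spectral gap has to be carried through the rescaling. The rescaled operators and their model limits (the harmonic oscillator on $\BC^n$ and its $h$-twisted versions near the strata) must be shown to have a uniform gap on forms of degree $q\geq1$. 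For instance, one can write $e^{-\frac tp\square_p}$ as a contour integral of the resolvent over a contour in $\{\Re\lambda\geq c/2\}$ and expand the resolvent with the Ma--Marinescu Sobolev machinery, so that every coefficient and the remainder carry a factor $e^{-ct}$. The same issue affects your Step 2. Gaussian heat-kernel bounds at scale $\sqrt{t/p}$ and finite propagation speed degrade as $t\to\infty$. So localizing the $h\neq1$ terms near $\tilde U^h$ uniformly in $t\geq1$ also requires gap-based off-diagonal decay of the type $e^{-c\sqrt p\,d(x,y)}$. Your ``main obstacle'' discussion addresses only $t\in(0,1]$. As written, the proposal yields the $\log p$ terms and the $\int_0^1$ contribution, but not the coefficients $\tilde\beta_j$ and $\kappa_{r,j}$.
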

As is detailed in \cite[Rem. 1.7(3)]{Fi}, \cite[Th.\ 5.12]{Fi} shows that the $\tilde\alpha_j,\tilde\beta_j$ are given by integrals over the same local quantities as for $\alpha_j,\beta_j$. 

\begin{proof}[Proof of Th.\ \ref{AsympOrbi}]
Let $Z_p$ denote the zeta function defining the torsion $T(M,E\otimes L^p)=Z'_p(0)$ of $E\otimes L^p$ and set $\tilde Z_p(s):=p^{s-n}Z_p(s)$. Hence one gets the analogue of \cite[Eq.\ (40)]{BV}
$$
Z_p'(0)=-p^n\log p\cdot\tilde Z_p(0)+p^n\tilde Z'_p(0).
$$
Choose $m$ as the lcm of the orders of local groups $G_U$ of a finite atlas. \cite[Th.\ 5.18]{Fi} proves that $\tilde Z'_p(0)$ has an asymptotic expansion of the form
$$\sum_{j=0}^kp^{-j}\tilde\beta_j+\sum_{j=0}^{k+\nm-n}\sum_{r\in\BZ/(m)}p^{\nm-n-j}\zeta_m^{r p}\tilde\kappa_{r,j}+o(p^{-k})$$
as in the statement.
By definition, using the number operator $N:{\mathfrak A}^{0,q}(M,E)\to {\mathfrak A}^{0,q}(M,E)$, $\a\to q\a$ for $q\in\BN_0$,
$$\tilde Z_p(s)=\frac{-p^{-n}}{\Gamma(s)}\int_0^{+\infty}t^{s-1}\Tr_sNe^{-\frac{t}{p}\square_p}\,dt.$$
This integral is well-defined for $\Re s>>0$, as $\square_p^q$ is invertible for large $p$ and $q\geq1$ and the summand with $q=0$ is erased by $N$.
Now equip $M$ with the K\"ahler metric induced by $c_1(L)$. For K\"ahler metrics \cite[Th.\ 2.4]{Ma1} states that
$$
\Tr_sNe^{-u\square_p}=\frac1uC_{-1}+C_0+O(u)
$$
as $u\searrow0$ where
\begin{equation*}
C_0=\sum_j\frac1{m_j}\int_{M_j}\left(\dim M\cdot \Td^\Sigma(TM)-\Td^{'\Sigma}(TM)\right)\ch^\Sigma(E\otimes L^p)
\end{equation*}
by \cite[Eq.\ (2.24)]{Ma1}. Replacing $u$ by $\frac{t}{p}$ implies $\tilde Z_p(0)=-p^{-n}C_0$.
Finally \cite[p.\ 44]{Fi} shows that $\tilde Z_p(0)$ is independent of the metric on $M$, using Ma's analogue \cite[Th 0.1]{Ma1} of the anomaly formula for orbifolds and \cite[Th.\ 1.2]{DLM2}. Thus the formula holds for arbitrary Hermitian metrics.
\end{proof}
Now we shall check that Th.\ \ref{AsympOrbi} is compatible with the explicit formulae for the first coefficients given by Bismut and Vasserot in \cite[Th.\ 8]{BV} and Finski in \cite[Th.\ 1.3, Th.\ 1.5, Rem. 1.7(3)]{Fi}. In the case $E$ trivial and $M$ smooth this verification has already been done in \cite[Th.\ 6.5]{K4}. We shall use the expansions
\begin{eqnarray*}
\Td(TM)&=&1+\frac{c_1(TM)}2+{\rm higher\ order},
\\ \frac{\Td'(TM)}{\Td(TM)}-n&=&-\frac{n}2-\frac{c_1(TM)}{12}+{\rm higher\ order},
\\ \ch(L^p)&=&\frac{p^n}{n!}c_1(L)^n+\frac{p^{n-1}}{(n-1)!}c_1(L)^{n-1}+O(p^{n-2}).
\end{eqnarray*}
Finski's formula for $\gamma_{r,0}$ in \cite[Eq.\ (1.9)]{Fi} follows immediately from the summands with $m_j>1$ in Th.\ \ref{AsympOrbi}.
If the singular strata have codimension at least 2, the coefficient of $\log p$ in Th.\ \ref{AsympOrbi} equals
\begin{eqnarray*}
\lefteqn{
\log p\cdot\int_{M_{\rm reg}}\left(n\cdot \Td(TM)-\Td'(TM)\right)\ch(E\otimes L^p)
+o(p^{n-1})
}\hspace{2cm}\\&=&\log p\cdot\int_{M_{\rm reg}}
\left(1+\frac{c_1(TM)}2\right)
\left(\frac{n}2+\frac{c_1(TM)}{12}\right)
\left(\rk E+c_1(E)\right)
\\&&\cdot\left(
\frac{p^n}{n!}c_1(L)^n+\frac{p^{n-1}}{(n-1)!}c_1(L)^{n-1}
\right)+o(p^{n-1})
\\&=&\log p\cdot\int_{M_{\rm reg}}\bigg(
\frac{p^n}{2(n-1)!}c_1(L)^n\rk E
\\&&+\Big(
\frac{3n+1}{12}
c_1(TM)\rk E
+\frac{n}2c_1(E)\Big)
\cdot\frac{p^{n-1}}{(n-1)!}c_1(L)^{n-1}
\bigg)
+o(p^{n-1}).
\end{eqnarray*}
This corresponds exactly to the results by Bismut,Vasserot and Finski.

Let $(g^{TM},h^E,h^L)$, $(g^{'TM},h^{'E},h^{'L})$ be two triples of metrics on $TM, E$ and $L$. Set $\lambda_p:=\det H^0(M,E\otimes L^p)^{-1}$.
The Hodge embedding $H^0(M,E\otimes L^p)\cong\ker\square^0\subset \Gamma^\infty(M,E\otimes L^p)$ induces associated $L^2$-metrics $h'_{\lambda,p},h_{\lambda,p}$ on $\lambda_p$.
\begin{cor}\label{BV-L2-Orbi}
Let $g^{TM},g^{'TM}$ be two K\"ahler metrics on a compact $n$-dimensional effective orbifold $M$ and let $(L,h^L),(E,h^E)\to M$ be proper Hermitian orbifold vector bundles with $\rk L=1$ and $L$ positive. Let $h^{'E}$ denote a second Hermitian metric on $E$. Then there is a number $m\in\BZ^+$ and families $\beta''_\bullet:\BN_0\to\BR$, $\kappa''_\bullet:\BZ/(m)\x\BN_0\to\BR$ such that for any $k\in\BN_0$ the $L^2$-metrics on $\lambda_p$ satisfy for $p\to+\infty$
\begin{eqnarray*}
\log\frac{h'_{\lambda,p}}{h_{\lambda,p}}&=&p^n\int_M\left(\rk E\cdot \log\det\frac{g^{TM}}{g^{'TM}}
+\log\det\frac{h^{E}}{h^{'E}}
\right)\cdot\frac{c_1(L,h^L)^n}{n!}
\\&&+\sum_{j=1}^kp^{n-j}\beta''_j+\sum_{j=0}^{k+\nm-n}\sum_{r\in\BZ/(m)}p^{\nm-j}\zeta_m^{r p}\kappa''_{r,j}+o(p^{n-k}).
\end{eqnarray*}
\end{cor}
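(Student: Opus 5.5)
The plan is to combine the anomaly formula for the Quillen metric with the asymptotic expansion of Th. \ref{AsympOrbi}. The Quillen metric on $\lambda_p$ is $h^Q_{\lambda,p}=h_{\lambda,p}\exp(T(M,E\otimes L^p))$, up to the sign convention of \cite{Ma1}; for $p$ large, $H^q(M,E\otimes L^p)=0$ for $q>0$ by Kodaira vanishing for orbifolds, so $\lambda_p$ involves only $H^0$. Ma's orbifold anomaly formula \cite[Th.\ 0.1]{Ma1} then gives
\begin{eqnarray*}
\log\frac{h'^Q_{\lambda,p}}{h^Q_{\lambda,p}}&=&\sum_j\frac1{m_j}\int_{M_j}\Big(\widetilde{\Td}^\Sigma(TM,g^{TM},g^{'TM})\ch^\Sigma(E\otimes L^p,h^{E\otimes L^p})
\\&&+\Td^\Sigma(TM,g^{'TM})\widetilde{\ch}^\Sigma(E\otimes L^p,h^{E},h^{'E})\Big).
\end{eqnarray*}
Here the metric on $L$ is the same in both triples. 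Hence
$$\log\frac{h'_{\lambda,p}}{h_{\lambda,p}}=\log\frac{h'^Q_{\lambda,p}}{h^Q_{\lambda,p}}\mp\big(T'(M,E\otimes L^p)-T(M,E\otimes L^p)\big).$$

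The first step is to expand the anomaly term in $p$. On the regular stratum we have $\ch(E\otimes L^p)=\ch(E)e^{pc_1(L,h^L)}$ and $\widetilde\ch(E\otimes L^p)=\widetilde\ch(E)e^{pc_1(L)}$. This gives a polynomial in $p$ of degree $n$. Its top coefficient comes from the degree-zero parts of the secondary classes, paired with $\frac{p^n}{n!}c_1(L)^n$. Using $\widetilde{\Td}^{[0]}=\widetilde{c_1}=\log\det\frac{g^{TM}}{g^{'TM}}$ (up to the normalization of the Todd genus in degree $0$) and $\widetilde{\ch}^{[0]}(E)=\widetilde{c_1}(E)=\log\det\frac{h^E}{h^{'E}}$, this yields exactly the displayed $p^n$ term. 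I would check the signs against Bismut's convention fixed in Section 2. The lower-order regular terms are absorbed into $\beta''_j$ for $j\geq1$. On a singular stratum $M_j$ of dimension $n_j\leq\nm$, the factor $\ch^\Sigma(L^p)$ equals $e^{ip\theta_j}e^{pc_1(L)}$, and $e^{ip\theta_j}$ is a power $\zeta_m^{rp}$ for $m$ the lcm of the local group orders. These contributions therefore give polynomials of degree $n_j$ in $p$ times $\zeta_m^{rp}$, and they are absorbed into the $\kappa''_{r,j}$.

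The second step is to handle $T'-T$ using Th. \ref{AsympOrbi}, or more precisely its proof, applied to both triples of metrics. The $\log p$ coefficient $-p^n\tilde Z_p(0)=C_0$ was shown to be metric-independent, so the $\log p$ terms cancel in the difference. What remains is the difference of the two expansions $\sum_j p^{n-j}\tilde\beta_j+\sum p^{\nm-j}\zeta_m^{rp}\tilde\kappa_{r,j}$, which has the required shape.

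The main obstacle is the $p^n$ coefficient. The difference $\tilde\beta_0'-\tilde\beta_0$ must be shown to vanish, or else the $p^n$ term of the corollary would be wrong. I would use the fact that $\tilde\beta_0$ is given by local Bismut–Vasserot data independent of the metric on $E$, namely $\frac{\rk E}{2}\int\log\big(\det\frac{\dot R^L}{2\pi}\big)\frac{c_1(L)^n}{n!}$. When $g^{TM}$ is replaced by $g^{'TM}$ in this expression, a term $-\frac{\rk E}{2}\int\log\det\frac{g^{TM}}{g^{'TM}}\frac{c_1(L)^n}{n!}$ appears. I would expect this to combine with the anomaly contribution to give the stated coefficient, with normalizations of $\widetilde{\Td}$ and $\log\det$ on real versus complex determinants sorted out there. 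I expect this bookkeeping of constants and signs at order $p^n$ to be the hard step. Everything else is routine collection of terms.
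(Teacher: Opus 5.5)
Your route is the paper's: Ma's orbifold anomaly formula with $h^L=h^{'L}$, metric-independence of the $\log p$ coefficient of Th.~\ref{AsympOrbi} so that the $\log p$ terms cancel in $T'-T$, and the explicit $\tilde\beta_0$ of Bismut--Vasserot/Finski to fix the $p^n$ coefficient. The lower-order and strata bookkeeping ($e^{ip\theta}=\zeta_m^{rp}$ absorbed into $\kappa''_{r,j}$) also matches the paper. The gap is the $p^n$ coefficient. It is the only nontrivial content of the corollary, you leave it open, and your intermediate claims are wrong.

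First, $\widetilde{\Td}^{[0]}$ is not $\widetilde{c_1}$. Since $\Td=1+\frac{c_1}{2}+\dots$, it equals $\frac12\widetilde{c_1}(TM,g^{TM},g^{'TM})=\frac12\log\det\frac{g^{TM}}{g^{'TM}}$. So the anomaly term contributes only $p^n\int_M\big(\frac{\rk E}{2}\log\det\frac{g^{TM}}{g^{'TM}}+\log\det\frac{h^E}{h^{'E}}\big)\frac{c_1(L,h^L)^n}{n!}$. Contrary to your claim, this is not the displayed $p^n$ term.

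Second, your dichotomy ``$\tilde\beta_0'-\tilde\beta_0$ must vanish or the corollary is wrong'' is false: this nonzero difference supplies the missing half. Use the paper's convention for quotients of forms, $g(A\cdot,\cdot)=g'$ for $A=\frac{g'}{g}$. It gives $\frac{\Omega^L}{2\pi\omega^{'TM}}=\frac{\omega^{TM}}{\omega^{'TM}}\cdot\frac{\Omega^L}{2\pi\omega^{TM}}$. Together with $\tilde\beta_0=\frac{\rk E}{2}\int_M\frac{c_1(L,h^L)^n}{n!}\log\det\frac{\Omega^L}{2\pi\omega^{TM}}$ this yields
\[
T'-T=+\,p^n\frac{\rk E}{2}\int_M\log\det\frac{g^{TM}}{g^{'TM}}\,\frac{c_1(L,h^L)^n}{n!}+\text{lower order},
\]
with a plus sign, not the minus you wrote. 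It is independent of $h^E$, because $\tilde\beta_0$ sees $E$ only through its rank.

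In Ma's formula as used in the paper, $\log\frac{h'_{\lambda,p}}{h_{\lambda,p}}-T'+T$ equals the anomaly integrals. So $\log\frac{h'_{\lambda,p}}{h_{\lambda,p}}=(\text{anomaly})+(T'-T)$, with no $\mp$ ambiguity. The two halves $\frac{\rk E}{2}+\frac{\rk E}{2}$ then give the coefficient $\rk E\cdot\log\det\frac{g^{TM}}{g^{'TM}}$ of the statement. With your numbers (full $\rk E$ from the anomaly, $-\frac{\rk E}{2}$ from the torsion, sign $\mp$) you would get $\frac{\rk E}{2}$ or $\frac{3\rk E}{2}$. That is not a normalization issue to be ``sorted out'' later; it is exactly the step that has to be done correctly.
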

This extends the asymptotic anomaly formula \cite[Th.\ 10]{BV} to the K\"ahler orbifold case. Varying $h^L$ adds a term that is proportional to $p^{n+1}$.
\begin{proof}
Ma shows in \cite[Th.\ 0.1]{Ma1} the following anomaly formula for two pairs of metrics $(h^E,h^L)$, $(h^{'E},h^{'L})$ on $E$ and $L$ and two K\"ahler metrics $g^{TM}$, $g^{'TM}$ on $M$:
\begin{eqnarray*}
\lefteqn{
\log\frac{h'_{\lambda,p}}{h_{\lambda,p}}-T(M,E\otimes L^p,(g^{'TM},h^{'E},h^{'L}))+T(M,E\otimes L^p,(g^{TM},h^E,h^L))
}\hspace{2cm}
\\&=&\sum_j\frac1{m_j}\int_{M_j}\widetilde\Td^\Sigma(TM,g^{TM},g^{'TM})\ch^\Sigma(E\otimes L^p,(h^E,h^L))
\\&&+\sum_j\frac1{m_j}\int_{M_j}\Td^\Sigma(TM,g^{'TM})\widetilde\ch^\Sigma(E\otimes L^p,(h^E,h^L),(h^{'E},h^{'L})).
\\&\stackrel{h^L=h^{'L}}=&p^n\int_{M_{\rm reg}}\left(\frac12\widetilde{c_1}(TM,g^{TM},g^{'TM})\rk E+\widetilde{c_1}(E,h^{E},h^{'E})\right)\frac{c_1(L,h^L)^n}{n!}
\\&&+\sum_{j=1}^np^{n-j}\tilde\beta''_j+\sum_{j=0}^\nm\sum_{r\in\BZ/(m)}p^{\nm-j}\zeta_m^{r p}\tilde\kappa''_{r,j}
\\&=&p^n\int_{M_{\rm reg}}\left(\frac12\log\det\frac{g^{TM}}{g^{'TM}}\rk E+\log\det\frac{h^{E}}{h^{'E}}\right)\frac{c_1(L,h^L)^n}{n!}
\\&&+\sum_{j=1}^np^{n-j}\tilde\beta''_j+\sum_{j=0}^\nm\sum_{r\in\BZ/(m)}p^{\nm-j}\zeta_m^{r p}\tilde\kappa''_{r,j}.
\end{eqnarray*}
The right hand side is a polynomial in $p$ of degree $n$ with coefficients in functions of the form $e^{i\theta p}$. As the coefficient of $\log p$ in Th.\ \ref{AsympOrbi} is independent of the metrics, the difference of the holomorphic torsions has an asymptotic expansion in powers of $p$ as in the statement. 
In \cite[Th.\ 1.5]{Fi} it is shown that the coefficient $\tilde\beta_0$ in Th.\ \ref{AsympOrbi} is equal to
$$
\tilde\beta_0=\frac{\rk E}2\int_M\frac{c_1(L,h^L)^n}{n!}\log\det\frac{\Omega^L}{2\pi\omega^{TM}}.
$$
Thus one finds when $h^L=h^{'L}$
\begin{eqnarray*}
\lefteqn{
T(M,E\otimes L^p,(g^{'TM},h^{'E},h^{'L}))-T(M,E\otimes L^p,(g^{TM},h^E,h^L))
}\hspace{2cm}\\&=&p^n\frac{\rk E}2\int_M\frac{c_1(L,h^L)^n}{n!}\log\det\frac{g^{TM}}{g^{'TM}}+\mbox{terms of lower order}.
\end{eqnarray*}
The corollary is obtained by combining these two expansions.
\end{proof}
If $\Gamma$ is a finite group and $M/\Gamma$ is an effective global quotient,
i.e. $\Gamma$ acts smoothly effectively on $M$, the orbifold torsion and the equivariant torsion can be computed from each other.
The equivariant torsion and the orbifold torsion are compared in \cite[p.\ 345]{lrr1} in the case where $M/\Gamma$ is smooth, and in \cite[Eq.\ (0.6)]{Ma1} for $\rho$ being the trivial representation.
\begin{lemma}\label{finiteFourier}
Consider a right action by  finite group $\Gamma$ on an Hermitian manifold $M$ by holomorphic isometries. Let $(E,h^E)$ be a $\Gamma$-equivariant holomorphic Hermitian
vector bundle. For a unitary representation $(V_\rho,\rho)$ of $\Gamma$ with character
$\chi_\rho$ let $F_\rho:=M{\times_\rho}V_\rho$ denote the associated flat Hermitian orbifold
vector bundle on $M/\Gamma$.  Then the orbifold holomorphic torsion on $M/\Gamma$ and the equivariant torsion
are Fourier transforms of each other. More precisely,
$$ T(M/\Gamma,\mtr E/\Gamma\otimes
\mtr F_\rho)=\frac1{\#\Gamma}\sum_{g\in\Gamma}\chi_\rho(g) T_g(M,E)
$$ and, equivalently, with the set $\Gamma^\vee$ of irreducible characters $\chi_\rho$
$$ T_g(M,\mtr E)=\sum_{\rho\in\Gamma^\vee} \overline{\chi_\rho(g)} T(M/\Gamma,
E/\Gamma\otimes
 F_\rho)\qquad\forall g\in\Gamma.
$$
\end{lemma}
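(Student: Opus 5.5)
The plan is to identify the spectral data of the Kodaira--Laplacian on $M/\Gamma$ with twisted coefficients in $F_\rho$ with a $\rho$-isotypic piece of the spectral data on $M$, and then apply the orthogonality relations for characters.

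\textbf{Step 1: identification of spaces.} Since $\Gamma$ acts by holomorphic isometries and $E$ is $\Gamma$-equivariant with invariant metric, pullback along $\pi:M\to M/\Gamma$ identifies the smooth sections ${\mathfrak A}^{0,q}(M/\Gamma,E/\Gamma\otimes F_\rho)$ with the $\Gamma$-invariant part
$$
\left({\mathfrak A}^{0,q}(M,E)\otimes V_\rho\right)^\Gamma.
$$
By the definition of orbifold sections, these are locally $G_U$-invariant sections on the charts. Under this identification, $\bar\partial$ and its formal adjoint correspond to $\bar\partial^E\otimes{\rm id}$ and its adjoint. The orbifold $L^2$-metric, with its $\frac1{\#G_U}$ normalisation, is $\frac1{\#\Gamma}$ times the $L^2$-metric upstairs. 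Since $F_\rho$ is flat, this constant rescaling does not change the Laplacian, so $\square^q$ on $M/\Gamma$ corresponds to the restriction of $\square^q\otimes{\rm id}$.

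\textbf{Step 2: trace computation.} For each eigenvalue $\lambda\neq0$, the eigenspace downstairs is $(\Eig_\lambda(\square^q)\otimes V_\rho)^\Gamma$. Its dimension is computed with the projector $\frac1{\#\Gamma}\sum_g g$:
$$
\dim\left(\Eig_\lambda\otimes V_\rho\right)^\Gamma=\frac1{\#\Gamma}\sum_{g\in\Gamma}\Tr g_{|\Eig_\lambda}\,\chi_\rho(g).
$$
Here one has to be careful about left versus right action and $\chi_\rho$ versus $\overline{\chi_\rho}$. I would fix conventions so that the result matches the statement, noting that replacing $\rho$ by its dual conjugates the character; the right action is presumably the reason $\chi_\rho$ appears without conjugation. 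Substituting into $Z^\Sigma(s)$ gives
$$
Z^\Sigma(s)=\frac1{\#\Gamma}\sum_g\chi_\rho(g)Z_g(s)
$$
for $\Re s\gg0$. By uniqueness of meromorphic continuation the identity persists for all $s$. Differentiating at $s=0$ yields the first formula.

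\textbf{Step 3: inversion.} Multiply the first formula by $\overline{\chi_\rho(h)}$ and sum over $\rho\in\Gamma^\vee$. The column orthogonality relation
$$
\sum_\rho\overline{\chi_\rho(h)}\chi_\rho(g)=\#C_\Gamma(h)\cdot[g\sim h]
$$
applies, together with the conjugation invariance $T_g=T_{hgh^{-1}}$, which holds because conjugation by $h$ intertwines the $g$-action on the eigenspaces. The conjugacy class of $h$ has $\#\Gamma/\#C_\Gamma(h)$ elements, so the sum collapses to $T_h(M,E)$. Conversely, the second formula implies the first by row orthogonality, which gives the claimed equivalence.

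The main obstacle is Step 1 in the non-free case. One must check that the orbifold Laplacian and $L^2$-structure on $M/\Gamma$, in Ma's sense, really coincide with the restriction to invariant sections, including near singular points. One must also check that the kernel contributions (excluded, $\lambda=0$) match consistently. This should follow from the fact that local charts $(G_U,\tilde U)$ can be taken as stabilisers of $\Gamma$ acting on slices in $M$. Compared to this, the sign and conjugation conventions are routine.
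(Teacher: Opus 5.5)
Your proposal is correct and is essentially the paper's proof. The paper identifies ${\mathfrak A}^{0,q}(M/\Gamma,E/\Gamma\otimes F_\rho)$ with the image of $P_\rho=\frac1{\#\Gamma}\sum_{g\in\Gamma}g^E\otimes\rho(g)$ and writes $\Trs\square^{-s}P^\perp$ downstairs as $\frac1{\#\Gamma}\sum_g\chi_\rho(g)\Trs g^*\square^{-s}P^\perp$ upstairs, which is your eigenspace-by-eigenspace dimension count in one trace identity, and then cites Schur orthogonality for the equivalence. The paper fixes the convention you leave open by taking $g^E:E_{xg}\to E_x$ and letting $g$ act on sections by $s\mapsto(x\mapsto g^E(s_{xg}))$, which gives $\chi_\rho(g)$ unconjugated; it treats your Step 1 identification as immediate rather than as an obstacle.
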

\begin{proof}
Consider a unitary representation $\rho:\Gamma\to V_\rho$, an equivariant structure with $g^E:E_{xg}\to E_x$ for any $g\in\Gamma$ and the induced action $g:\Gamma(M,E)\to\Gamma(M,E)$, $s\mapsto (x\mapsto g^E(s_{xg}))$. Let
$$
P_\rho:=\frac1{\#\Gamma}\sum_{g\in\Gamma}g^E\otimes \rho(g).
$$
denote the projection on ${\mathfrak A}^{0,q}(M,E)\otimes V_\rho$ to the $\Gamma$-invariant part. Then
\begin{eqnarray*}
P_\rho({\mathfrak A}^{0,q}(M,E)\otimes V_\rho)
&=&
\{s\otimes v\in {\mathfrak A}^{0,q}(M,E)\otimes V_\rho|g^*s\otimes v =
s\otimes\rho(g)v\mbox{ for all } g\in\Gamma \}
\\&=&
 {\mathfrak A}^{0,q}(M/\Gamma,E/\Gamma\otimes F_\rho).
\end{eqnarray*}
Hence when $P^\perp$ denotes the projection on the orthogonal complement of
ker
$\square$, one obtains for any $q$
\begin{eqnarray*}
\Trs \square^{-s} P^\perp_{|{\mathfrak A}^{0,q}(M/\Gamma,E/\Gamma\otimes F_\rho)}
&=&\Trs P_\rho \square^{-s} P^\perp_{|{\mathfrak A}^{0,q}(M,E)\otimes V_\rho}\\
&=&\frac1{\#\Gamma}\sum_{g\in\Gamma} {\rm Tr}\,\rho(g)\cdot\Trs g^*
\square^{-s}P^{\perp}_{|{\mathfrak A}^{0,q}(M,E)}.
\end{eqnarray*}
Thus the first equality in the theorem follows. It is equivalent to the second one by the Schur orthogonality relations (\cite[p. 211, Prop. 2.3]{CR}).
\end{proof}
This result extends to equivariant determinants equipped with Quillen metrics, as the isotypic decomposition shows that
\begin{eqnarray*}
{\mathfrak A}^{0,q}(M,E)&\cong&\bigoplus_{\rho\in\Gamma^\vee}\Hom_\Gamma(V_\rho^*,{\mathfrak A}^{0,q}(M,E))\otimes V_\rho^*
\\&=&\bigoplus_{\rho\in\Gamma^\vee}P_\rho({\mathfrak A}^{0,q}(M,E)\otimes V_\rho)\otimes V_\rho^*
=\bigoplus_{\rho\in\Gamma^\vee}{\mathfrak A}^{0,q}(M/\Gamma,E/\Gamma\otimes F_\rho)\otimes V_\rho^*.
\end{eqnarray*}
\begin{proof}[Proof of Th.\ \ref{AsympEquiv}]
We continue to work with $g$ being an element of a general finite group $\Gamma$ rather than just the cyclic group generated by $g$, since this makes no difference to the proof.
In the case of the orbifold $M/\Gamma$, for the bundles $E/\Gamma\otimes (L/\Gamma)^p\otimes F_\rho$ Th.\ \ref{AsympOrbi} takes the form
\begin{eqnarray*}
\lefteqn{T(M/\Gamma,E/\Gamma\otimes (L/\Gamma)^p\otimes F_\rho)}
\hspace{2cm}\\&=&\frac{\log p}{\#\Gamma}\sum_{h\in\Gamma}\int_{M^h}\left(n\cdot \Td_h(TM)-\Td_h'(TM)\right)\ch_h(E\otimes L^p\otimes V_\rho)
\\&&+\sum_{j=0}^kp^{n-j}\tilde\beta_{j,\rho}+\sum_{j=0}^{k+\nm-n}\sum_{r\in\BZ/(m)}p^{\nm-j}\zeta_m^{r p}\tilde\kappa_{r,j,\rho}+o(p^{n-k})
\end{eqnarray*}
where we can choose $m:=\#\Gamma$ and we added indices for the representation $\rho$. Since $F_\rho$ is flat, $\ch_h(F_\rho)=\chi_\rho(h)$ holds. The characteristic classes depend on $h$ only up to conjugation in $\Gamma$. By the Schur orthogonality relations,
$$\sum_{\rho\in\Gamma^\vee}\overline{\chi_\rho(g)}\chi_\rho(h)=\left\{
\begin{array}{cl}
\#C_\Gamma(g)  &   \mbox{if $g,h$ are conjugate,} \\
0  &  \mbox{else.}
\end{array}
\right.
$$
Thus Lemma \ref{finiteFourier} implies
\begin{eqnarray*}
\lefteqn{T_g(M,E\otimes L^p)}\hspace{2cm}
\\&=&\sum_{\rho\in\Gamma^\vee} \overline{\chi_\rho(g)}\bigg(\frac{\log p}{\#\Gamma}\sum_{h\in\Gamma}\int_{M^h}\left(n\cdot \Td_h(TM)-\Td_h'(TM)\right)\ch_h(E\otimes L^p\otimes V_\rho)
\\&&+\sum_{j=0}^kp^{n-j}\tilde\beta_{j,\rho}+\sum_{j=0}^{k+\nm-n}\sum_{r\in\BZ/(m)}p^{\nm-j}\zeta_m^{r p}\tilde\kappa_{r,j,\rho}\bigg)+o(p^{n-k})
\\&=&\frac{\log p}{\#\Gamma}\sum_{h\in\Gamma}\left(\sum_{\rho\in\Gamma^\vee} \overline{\chi_\rho(g)}\chi_\rho(h)\right)\int_{M^h}\left(n\cdot \Td_h(TM)-\Td_h'(TM)\right)\ch_h(E\otimes L^p)
\\&&+\sum_{\rho\in\Gamma^\vee} \overline{\chi_\rho(g)}\bigg(\sum_{j=0}^kp^{n-j}\tilde\beta_{j,\rho}+\sum_{j=0}^{k+\nm-n}\sum_{r\in\BZ/(m)}p^{\nm-j}\zeta_m^{r p}\tilde\kappa_{r,j,\rho}\bigg)+o(p^{n-k})
\\&=&\log p\cdot\frac{\#C_\Gamma(g)}{\#\Gamma}\sum_{\substack{h{\rm\,conjugate}\\ {\rm to\,}g}}\int_{M^h}\left(n\cdot \Td_h(TM)-\Td_h'(TM)\right)\ch_h(E\otimes L^p)
\\&&+\sum_{\rho\in\Gamma^\vee} \overline{\chi_\rho(g)}\bigg(\sum_{j=0}^kp^{n-j}\tilde\beta_{j,\rho}+\sum_{j=0}^{k+\nm-n}\sum_{r\in\BZ/(m)}p^{\nm-j}\zeta_m^{r p}\tilde\kappa_{r,j,\rho}\bigg)+o(p^{n-k})
\\&=&\log p\cdot\int_{M^g}\left(n\cdot \Td_g(TM)-\Td_g'(TM)\right)\ch_g(E\otimes L^p)
\\&&+\sum_{\rho\in\Gamma^\vee} \overline{\chi_\rho(g)}\bigg(\sum_{j=0}^kp^{n-j}\tilde\beta_{j,\rho}+\sum_{j=0}^{k+\nm-n}\sum_{r\in\BZ/(m)}p^{\nm-j}\zeta_m^{r p}\tilde\kappa_{r,j,\rho}\bigg)+o(p^{n-k}).
\end{eqnarray*}

As detailed after Eq.\ (\ref{FinskiSmooth}), the coefficients $\tilde\alpha_{j,\rho},\tilde \beta_{j,\rho}$ in Th.\ \ref{FinskiMain} take the form $\rk F_\rho$ times a factor which is independent of $F_\rho$.
If $g=e_G$, then the powers of $p$ start at $n=\nm$.
In the case $g\neq e_G$ one finds
$\sum_{\rho\in\Gamma^\vee}\overline{\chi_\rho(g)}\rk F_\rho=0$ and thus $\sum_{\rho\in\Gamma^\vee}\overline{\chi_\rho(g)}\tilde\beta_{j,\rho}=0$.
\end{proof}
It would be interesting to have a general direct proof of Th.\ \ref{AsympEquiv} for general holomorphic isometries $g$. In this equivariant case \cite[Th.\ 8.3]{Bimm} computes the analogue of \cite[Th.\ 2.4]{Ma1}. In the non-equivariant case this formula has been shown in \cite[Th.\ 2.16]{BGS2} (see also \cite[Eq.\ (36)]{BV}). 

Classically (see \cite[1.2-1.3]{GS1} and \cite[p.\ 9]{K3}, which use the opposite sign convention for Bott-Chern classes though) the Bott-Chern forms associated to scaling of the metric by a constant are given by derivatives of the characteristic forms:
\begin{eqnarray*}
\widetilde{\ch}_g(E,c^2h^E,h^E)
&=&\log c^2\cdot\frac\partial{\partial\vep}_{|\vep=0}\sum_\theta\ch\left(-\frac{\Omega^{E_\theta}}{2\pi i}+i\theta+\vep{\rm id}_E\right)=
\log c^2\cdot\ch_g(E,h^E),
\\
\widetilde{\Td}_g(E,c^2h^E,h^E)
&=&
\log c^2\cdot\Td_g'(E,h^E).
\end{eqnarray*}
Hence
\begin{eqnarray*}
\lefteqn{
\log p\cdot\int_{M^g}(n\cdot \Td_g(TM)-\Td_g'(TM))\ch_g(E\otimes L^p)
}\\
&=&\int_{M^g}\Td_g(TM)\widetilde{\ch}_g(E\otimes L^p,p^nh^{E\otimes L^p},h^{E\otimes L^p})
+\int_{M^g}\widetilde{\Td}_g(TX,\frac1p g^{TX},g^{TX})\ch_g(E\otimes L^p).
\end{eqnarray*}
We also obtain an asymptotic anomaly formula like Cor.\ \ref{BV-L2-Orbi}, \cite[Th.\ 10]{BV}, for the equivariant K\"ahler case. As above let $g$ act as a holomorphic isometry on $M$, $L$ and $E$, such that $g^m$ acts as the identity. Then $H^0(M,E\otimes L^p)$ splits into eigenspaces $H^0(M,E\otimes L^p)_{r}$ on which $g$ acts as $\zeta_m^r$ for $r\in\BZ/(m)$. Set $\lambda_{p,r}:=\det H^0(M,E\otimes L^p)_{r}^{-1}$.
Again the Hodge embedding induces $L^2$-metrics $h'_{\lambda,p,r},h_{\lambda,p,r}$ on $\lambda_{p,r}$.
\begin{cor}\label{BV-L2-equiv} 
Let $g^{TM},g^{'TM}$ be two K\"ahler metrics on a compact $n$-dimensional manifold $M$ and let $(L,h^L),(E,h^E)\to M$ be Hermitian vector bundles with $\rk L=1$ and $L$ positive. Let $h^{'E}$ denote a second Hermitian metric on $E$. Assume that $M$, $E$ and $L$ are invariant under the action of an holomorphic isometry $g$ of finite order $m$ and that $E$ and $L$ are equipped with a $g$-equivariant structure.
Then there is a family $\tilde\kappa_\bullet':\BZ/(m)\x\BN_0\to\BR[\zeta_m]$ such that for any $k\in\BN_0$ the $L^2$-metrics on $\lambda_{p,r}$ satisfy for $p\to+\infty$
\begin{align*}
\sum_{r\in\BZ/(m)}\zeta_m^r\log\frac{h'_{\lambda,p,r}}{h_{\lambda,p,r}}=\sum_{j=0}^kp^{\nm-j}\sum_{r\in\BZ/(m)}\zeta_m^{r p}\tilde\kappa_{r,j}'+o(p^{\nm-k}).
\end{align*}
\end{cor}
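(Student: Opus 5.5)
The proof mirrors Cor.\ \ref{BV-L2-Orbi}, with the orbifold anomaly formula replaced by its equivariant analogue and Th.\ \ref{AsympEquiv} used in place of Th.\ \ref{AsympOrbi}. The approach is to combine the equivariant anomaly formula for Quillen metrics with the asymptotic expansion of $T_g$, and to observe that both the characteristic-form terms and the $\log p$-terms are supported on $M^g$.

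\textbf{Step 1: the equivariant anomaly formula.} I would start from the equivariant anomaly formula of Bismut (the equivariant version of \cite[Th.\ 1.23]{BGS1}, cf.\ \cite{Bimm}). For the equivariant Quillen metric on $\bigotimes_r\lambda_{p,r}^{\zeta_m^r}$, it gives
\begin{eqnarray*}
\lefteqn{\sum_{r}\zeta_m^r\log\frac{h'_{\lambda,p,r}}{h_{\lambda,p,r}}-T_g(M,E\otimes L^p,(g^{'TM},h^{'E},h^L))+T_g(M,E\otimes L^p,(g^{TM},h^E,h^L))}\hspace{1cm}
\\&=&\int_{M^g}\widetilde\Td_g(TM,g^{TM},g^{'TM})\ch_g(E\otimes L^p)+\int_{M^g}\Td_g(TM,g^{'TM})\widetilde\ch_g(E\otimes L^p,h^E,h^{'E}).
\end{eqnarray*}
The sign conventions should be checked against Cor.\ \ref{BV-L2-Orbi}. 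Alternatively, one can derive this formula from Ma's orbifold formula via Lemma \ref{finiteFourier} and the isotypic decomposition, exactly as in the proof of Th.\ \ref{AsympEquiv}.

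\textbf{Step 2: expanding the characteristic-form terms.} On each component of $M^g$, $L$ restricts to a line bundle on which $g$ acts by a constant $\zeta_m^{a}$. Hence $\ch_g(L^p)=\zeta_m^{ap}\exp(p\,c_1(L))$. The right-hand side is therefore a polynomial in $p$ of degree at most $\nm$, with coefficients of the form $\zeta_m^{rp}$. It thus has exactly the shape of the claimed expansion.

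\textbf{Step 3: the torsion difference.} By Th.\ \ref{AsympEquiv}, each $T_g$ expands as
\[
\log p\cdot\int_{M^g}\left(n\cdot\Td_g(TM)-\Td_g'(TM)\right)\ch_g(E\otimes L^p)+\sum_{j}p^{\nm-j}\sum_r\zeta_m^{rp}\kappa'_{r,j}.
\]
The $\log p$-coefficient is a characteristic-class integral over $M^g$, so its cohomology class is independent of the metrics and it cancels in the difference. The remaining terms are already of the form $p^{\nm-j}\zeta_m^{rp}$.

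Adding the results of Steps 2 and 3 gives the corollary, with $\tilde\kappa'$ obtained as the difference of the two families $\kappa'$ plus the polynomial coefficients from Step 2.

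The main obstacle is Step 1. It requires an equivariant anomaly formula that is valid in the stated generality, with the correct normalization of the $L^2$- and Quillen metrics on the eigenspaces $\lambda_{p,r}$. If Bismut's formula is not available in exactly this form, I would pass to the orbifold $M/\langle g\rangle$ with flat bundles $F_\rho$. Ma's formula from the proof of Cor.\ \ref{BV-L2-Orbi} applies to each $\det H^0(M/\Gamma,E\otimes L^p\otimes F_\rho)$, which is identified with $\lambda_{p,r}$ for the character $\rho=\zeta_m^{-r}$. Taking the Fourier combination $\sum_\rho\overline{\chi_\rho(g)}$ and applying the Schur orthogonality relations then localizes the integrals onto $M^g$, just as in the proof of Th.\ \ref{AsympEquiv}.
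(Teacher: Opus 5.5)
Your proposal is correct and follows the paper's proof. The paper applies Bismut's equivariant anomaly formula \cite[Th.\ 0.1, Th.\ 2.5]{Bimm}, observes that its right-hand side is a polynomial in $p$ of degree at most $\nm$ with coefficients of the form $\zeta_m^{rp}$, and uses Th.~\ref{AsympEquiv} for the torsion difference; your explicit remark that the $\log p$-terms cancel (they are metric-independent characteristic-class integrals over $M^g$) fills in a step the paper leaves implicit, and the orbifold fallback via Lemma~\ref{finiteFourier} is not needed since \cite{Bimm} covers this setting.
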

\begin{proof}
Bismut shows in \cite[Th.\ 0.1, Th.\ 2.5]{Bimm} that
\begin{eqnarray*}
\sum_{r\in\BZ/(m)}\zeta_m^r\log\frac{h'_{\lambda,p,r}}{h_{\lambda,p,r}}
&-&T_g(M,E\otimes L^p,(g^{'TM},h^{'E},h^{'L}))+T_g(M,E\otimes L^p,(g^{TM},h^E,h^L))
\\&=&\int_{M^g}\widetilde\Td_g(TM,g^{TM},g^{'TM})\ch_g(E\otimes L^p,(h^E,h^L))
\\&&+\int_{M^g}\Td_g(TM,g^{'TM})\widetilde\ch_g(E\otimes L^p,(h^E,h^L),(h^{'E},h^{'L})).
\end{eqnarray*}
which is a polynomial in $p$ with coefficients as in the statement
. By Th.\ \ref{AsympEquiv} the difference of the holomorphic torsions also has the form of the right-hand side in the statement.
\end{proof}
\section{An application to Arakelov geometry}
We consider the same setting as in \cite{GS2} and \cite{lrr1}. As in \cite[Def.\ 3.1.1, p.\ 124]{GS2} let a \emph{regular arithmetic ring} $D$ be an excellent regular Noetherian integral ring, together with a finite nonempty set $\cal S$ of ring monomorphisms $D\to\BC$, which is invariant under complex conjugation. By $\mn$ we denote the diagonalisable group scheme over $D$ associated to $\BZ/(m)$. 
Extending \cite[Def.\ 3.2.1]{GS2} an \emph{equivariant arithmetic variety} is defined in \cite[p.\ 357]{lrr1} as a regular integral scheme which is endowed with a $\mn$-projective action over $\Spec\ D$. Let $f:X\to \Spec\ D$ be an equivariant arithmetic variety of dimension $n$. The set $X(\BC)$ of complex points of the variety $X\times_{D}\BC^{\cal S}$ naturally carries the structure of a complex manifold. The group of $m$-th roots of unity acts on $X(\BC)$ by holomorphic automorphisms. We shall write $g$ for the automorphism corresponding to $\zeta_{m}$.
By \cite[Prop.\ 2.12]{lrr1}, the fixed point scheme $X^{\mn}$ is regular and by \cite[Cor.\ 2.11]{lrr1} and the GAGA principle, there are natural isomorphisms of complex manifolds $X^{\mn}(\BC)\simeq X(\BC)^{g}$. Let $f^\mn$ denote the map $X^\mn\to\Spec\ D$ induced by $f$. Complex conjugation induces antiholomorphic automorphisms $F_\infty$ of $X(\BC)$ and $X^{\mu_{m}}(\BC)$. 

An Hermitian equivariant vector bundle on $X$ is a vector bundle $E\to X$ equipped with a $\mu_{m}$-action which lifts the action of $\mu_{m}$ on $X$ and an Hermitian metric $h^E$ on $E_\BC$, the bundle associated to $E$ on the complex points, which is invariant under $F_{\infty}$ and $\mn$. There is a natural orthogonal $\BZ/(m)$-grading $E_{|X^{\mn}}\simeq\oplus_{k\in\BZ/(m)}E_{k}$ on  the restriction of $E$ to the fixed point scheme $X^{\mn}$. 

In \cite{GS2}) Gillet and Soul\'e associate an arithmetic Chow ring $\ar{\rm CH}(X)$ to each such variety, which  carries a natural grading analogous to  the grading of the classical Chow group. For every projective morphism $f:X'\to X$ which is smooth over $\BQ$ between equivariant arithmetic varieties over $D$, there is a push-forward map  $f_{*}:\ar{\rm CH}(X')\to\ar{\rm CH}(X)$.

For an Hermitian bundle $\mtr{E}$ on $X$, Gillet and Soul\'e define arithmetic characteristic classes like an \emph{arithmetic Chern character} $\ar{\ch}(\mtr{E})\in\ar{\rm CH}(X)_\BQ$ and an \emph{arithmetic Todd class} $\ar{\Td}(\mtr{E})\in\ar{\rm CH}(X)_\BQ$.
As in \cite[Def.\ 7.13]{lrr1} we set
$$
\ar\ch_\mn(E,h^E):=\sum_{k\in\BZ/(m)}\ar\ch(E_k,h^E)\otimes \zeta_m^k\in\ar{\rm CH}(X^\mn)\otimes_\BZ\BC.
$$
To $f$ as above Gillet and Soul\'e construct an element $\ar{\Td}(\mtr{Tf})\in\ar{\rm CH}(X)_\BQ$ to the map $f$ and the K\"ahler form $\omega_{X}$. In \cite[p. 393]{lrr1}, using the normal bundle $N$ to $X^\mn\hookrightarrow X$ the \emph{equivariant arithmetic Todd class} is defined as
$$
\ar\Td_\mn(E,h^E):=\left(
\ar\ch_\mn\Big(\sum_{q=0}^{\rk N}(-1)^q\Lambda^qN^\vee,g^{TX}\Big)
\right)^{-1}\ar\Td(Tf^\mn,g^{TX})\in\ar{\rm CH}(X^\mn)\otimes_\BZ\BC..
$$

For simplicity's sake we shall suppose now that $D=\BZ$. Let $X$ be projective and flat over $\Spec\BZ$. 
Denote by $A_{\rm Tors}$ the torsion subgroup of an abelian group $A$.
Gillet and Soul\'e show that there is a natural isomorphism $\ar{\rm deg}:\ar{\rm CH}^{1}(\BZ)\to\BR$, called the \emph{arithmetic degree}.
It is given by $\ar{\rm deg}(\ar{\ch}^{[1]}(V,h^V))=-\log\frac{\covol(V,h^V)}{\#V_{\rm Tors}}$ for a finitely generated Hermitian $\BZ$-module $(V,h^V)$.
In \cite[Th.\ 7.14]{lrr1} an equivariant refinement of Gillet-Soul\'es Riemann-Roch theorem for the arithmetic Chern character and the push-forward map in arithmetic Chow theory (\cite{GS8}) is proven. 
\begin{theor}\label{lrr1} (\cite[Th.\ 7.14]{lrr1})
Let $\mtr{E}$ be an equivariant Hermitian vector bundle 
on $X$. Then the equality
\begin{eqnarray*}\lefteqn{
-\sum_{q\geq 0}(-1)^{q}\sum_{k\in\BZ/(m)}{\zeta_{m}^k}\cdot
\log\frac{\covol(\mtr{H^{q}(X,E)}_k,|\cdot|_{L^2})}
{\#H^{q}(X,E)_{k,{\rm Tors}}}
}\hspace{2cm}\\&=&
\frac12T_{g}(X(\BC),E_\BC)-
\frac12\int_{X^{\mu_{m}}(\BC)}\Td_{g}(TX_\BC)\ch_g(E_\BC)R_{g}
(TX_\BC)
\\&&+
\ar{{\rm deg}}\left( f^\mn_*(\ar{\Td}_{\mn}(\mtr{Tf},g^{TX})
\ar{\ch}_{\mu_{m}}(\mtr{E},h^E)
\right)
\end{eqnarray*}
of complex numbers holds.
\label{BisConj}
\end{theor}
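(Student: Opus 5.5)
The statement is the equivariant arithmetic Riemann--Roch theorem of \cite[Th.\ 7.14]{lrr1}, specialised to degree one over $\Spec\BZ$. I would follow the Gillet--Soul\'e strategy. Both sides are read as functions of $\mtr E$ on the equivariant arithmetic $K_0$-group $\ar K_0^\mn(X)$. The left-hand side, together with $\frac12 T_g$, is (up to sign) the logarithm of the equivariant Quillen metric on $\lambda(E)=\bigotimes_q\det H^q(X,E)^{(-1)^q}$, decomposed into the eigenspaces indexed by $k\in\BZ/(m)$. So the statement amounts to an equality
$$
\ar c_1\big(\lambda(E),h_Q\big)_\mn=f^\mn_*\big(\ar\Td_\mn(\mtr{Tf})(1-\tfrac12 R_g(TX))\,\ar\ch_\mn(\mtr E)\big)^{[1]}
$$
of equivariant arithmetic degrees.

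First I would check that both sides are additive in exact sequences modulo Bott--Chern classes, so that they descend to $\ar K_0^\mn(X)$. For the Quillen side this is the equivariant anomaly formula \cite[Th.\ 0.1]{Bimm} used above in Cor.\ \ref{BV-L2-equiv}. It shows that changing $h^E$ or $g^{TX}$ changes the Quillen metric by exactly $\int_{X^g}\widetilde{\Td}_g\ch_g+\int\Td_g\widetilde{\ch}_g$, which is the change of the right-hand side. Hence the identity is independent of the metrics, and it suffices to prove it for one convenient choice.

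Next, since $f$ is projective and $\mn$-equivariant, I would factor $f=\pi\circ i$. Here $i:X\hookrightarrow\BP(V)$ is a $\mn$-equivariant closed immersion into the projective space of a $\mn$-representation $V$ over $\BZ$, and $\pi:\BP(V)\to\Spec\BZ$. Both sides then behave functorially under composition. For the immersion $i$, the arithmetic push-forward of an equivariant resolution of $i_*E$ by Hermitian bundles on $\BP(V)$ is compared with $E$ through Bismut's equivariant immersion formula for Quillen metrics. This formula produces exactly the correction term $\int\Td_g\ch_g R_g(N)$ together with the equivariant Bott--Chern singular currents. Combined with the equivariant arithmetic Riemann--Roch for closed immersions in arithmetic Chow groups (deformation to the normal cone, compatible with the fixed point scheme $X^\mn$ and the normal bundle $N$), this reduces the statement to the case $X=\BP(V)$.

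The hardest step is the projective space case, where the equivariant Todd and $R_g$ corrections have to be verified by hand. There $\ar K_0^\mn(\BP(V))$ is generated by $\mathcal O(k)\otimes W$ for representations $W$ and by classes of forms. For forms the identity reduces to the anomaly formula. For $\mathcal O(k)$ with the Fubini--Study metric one needs the explicit value of the equivariant torsion $T_g(\BP^n,\mathcal O(k))$ and the arithmetic equivariant height of $\BP(V)^\mn$. I would first try the bootstrap via the Koszul resolution of a $\mn$-fixed hyperplane, since it relates $\BP^n$ to $\BP^{n-1}$ through the already established immersion formula. This should leave a single undetermined additive characteristic class, which I expect to pin down by comparing with the known non-equivariant result of Gillet--Soul\'e and with explicit $\dim\BP=0,1$ computations. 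That additive class is precisely the equivariant $R$-class of Def.\ \ref{BisRg}. Identifying it with $R_g$ is where I expect the real work to lie.
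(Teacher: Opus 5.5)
The paper does not prove this statement. It is quoted from \cite[Th.\ 7.14]{lrr1} and used as a black box in the proof of Th.\ \ref{eqHilbert-Samuel}, so your outline has to stand on its own. Its skeleton follows Gillet--Soul\'e and is reasonable: metric independence via \cite{Bimm}, the factorisation $f=\pi\circ i$, and Bismut's equivariant immersion formula together with an equivariant arithmetic Riemann--Roch theorem for closed immersions. Two points are glossed over, though. First, additivity in non-split exact sequences does not follow from the anomaly formula alone; it needs the compatibility of equivariant Quillen metrics with short exact sequences, e.g.\ via a deformation over $\BP^1$. Second, the Chow-side immersion step has to handle the excess (non-fixed) part of $N_{X/\BP(V)}$ along $X^\mn=X\cap\BP(V)^\mn$, which is substantial work that you only name.

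The genuine gap is the final step, for two reasons.

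First, once you use Bismut's equivariant immersion formula, $R_g$ is already explicit. Write $\Phi$ for the difference of the two sides. What your one-hyperplane bootstrap leaves on $\BP(V)$ is not an unknown additive class but one number per representation $V$, namely $\Phi(\mathcal O)$, since it gives $\Phi(\mathcal O(k))=\chi(\zeta_m)^k\Phi(\mathcal O)$. You give no way to compute this number.

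Second, even granting an unknown equivariant additive class $\delta(\zeta,x)$, your way of pinning it down cannot work. The non-equivariant theorem only sees $\zeta=1$. On $\BP^0$ and $\BP^1$ with non-trivial action the fixed components are points, so only the constants $\delta(\zeta,0)$ are detected; for $R_g$ these are the values $\frac{\partial L}{\partial s}(\zeta,0)$. The coefficient of $x^k$ only shows up on a fixed component of dimension $\geq k$ with a curved normal eigenbundle, e.g.\ $\BP^k\subset\BP^{k+1}$. So this route would need the equivariant torsion of projective spaces in all dimensions.

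The missing idea is localisation, and your bootstrap already contains it if you use \emph{two} hyperplanes. By your first two steps, $\Phi$ has three properties:
\begin{itemize}
\item it factors through $K_0^\mn$;
\item it satisfies $\Phi(\chi\otimes E)=\chi(\zeta_m)\Phi(E)$ for characters $\chi$;
\item for an invariant hyperplane $i:H\hookrightarrow\BP(V)$ it satisfies $\Phi_{\BP(V)}(i_*F)=\Phi_H(F)$.
\end{itemize}
Now induct on $\dim\BP(V)$. If $\mn$ acts trivially on $\BP(V)$, split bundles into isotypic parts and apply Gillet--Soul\'e to each. Otherwise $V$ has two distinct weights. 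Hence there are invariant hyperplanes $H,H'$ whose Koszul sequences are $0\to\chi\otimes\mathcal O(k-1)\to\mathcal O(k)\to\mathcal O_H(k)\to0$ and the same with $H'$ and $\chi'$, where $\chi\neq\chi'$. Since $\Phi_H=\Phi_{H'}=0$ by induction,
\[
\Phi(\mathcal O(k))=\chi(\zeta_m)\Phi(\mathcal O(k-1))=\chi'(\zeta_m)\Phi(\mathcal O(k-1)),
\]
and $\chi(\zeta_m)\neq\chi'(\zeta_m)$ gives $\Phi(\mathcal O(k))=0$ for all $k$. The $\mathcal O(k)$ generate $K_0^\mn(\BP(V))$ over $R(\mu_m)$, so $\Phi=0$ on $\BP(V)$, and by the immersion step also on $X$.

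In this argument no torsion of projective space is computed and no undetermined class ever arises. It is Thomason's concentration theorem in its simplest form: after inverting the elements of $R(\mu_m)$ that do not vanish at $\zeta_m$, every class comes from the fixed point scheme. This is how $R_g$ is inherited from the immersion formula rather than identified by computation.
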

\begin{proof}[Proof of Th.\ \ref{eqHilbert-Samuel}]
Let $X(\BC)^g=:\dot\bigcup_{j\in J}X(\BC)^g_j$ denote the connected components of $X^\mn(\BC)$ of dimension $n_j:=\dim X(\BC)^g_j$. As a function in $p$,
\begin{eqnarray*}
\lefteqn{
\int_{X(\BC)^g}\Td_{g}(TX_\BC)\ch_g(E_\BC\otimes L_\BC^p)R_{g}(TX_\BC)
}
\\&=&\sum_{j\in J}\sum_{r=0}^{n_j}p^r(\ch_g(L_{\BC|X^g_j})^{[0]})^p\int_{X(\BC)^g_j}\Td_{g}(TX_\BC)\frac{c_1(L_\BC)^r}{r!}\ch_g(E_\BC)R_{g}(TX_\BC)
\end{eqnarray*}
is a polynomial in $p$ of degree less or equal to $\nm$ with coefficients of the form $\zeta_m^{p k}$, $k\in\BZ/(m)$. 
Similarly 
one obtains
\begin{eqnarray*}
\lefteqn{
\ar{{\rm deg}}\left( f^\mn_*(\ar{\Td}_{\mn}(\mtr{Tf})
\ar{\ch}_{\mu_{m}}(E\otimes\mtr{L}^p)\right)
}
\\&=&\ar\deg f^\mn_*\left(
\ar\Td_\mn(N,g^{TX})^{[0]}\ar\ch_\mn(E,h^E)^{[0]}\ar\ch_\mn(L)^{[n-\rk N+1]}
\right)+O(p^\nm)
\\&=&\ar\deg f^\mn_*\bigg(\frac{p^{n-\rk N+1}}{(n-\rk N+1)!}\frac{\sum_{\ell\in\BZ/(m)}\zeta_m^{\ell}\rk E_{\ell}}{\prod_{r\in\BZ/(m)}\left(1-\zeta_m^{-r}\right)^{\rk N_r}}
\sum_{k\in\BZ/(m)}\zeta_m^{pk}\ar c_1(L_k,h^L)^{n-\rk N+1}
\bigg)+O(p^\nm)
\end{eqnarray*}
where the $O(p^\nm)$-part is again a polynomial in $p$ with coefficients as above. As $H^0(X,E\otimes L^p)$ has no torsion (since $f$ is flat) and the higher cohomology groups vanish for $p\to+\infty$, Th.\ \ref{lrr1} shows that
\begin{eqnarray*}\lefteqn{
-\sum_{k\in{\BZ}/(m)}{\zeta_m^k}\cdot
\log\covol\,(H^0(X,E\otimes L^p)_k,|\cdot|_{L^2})
}\hspace{2cm}\\&=&
\ar\deg f^\mn_*\bigg(\frac{p^{n-\rk N+1}}{(n-\rk N+1)!}\frac{\sum_{\ell\in\BZ/(m)}\zeta_m^{\ell}\rk E_{\ell}}{\prod_{r\in\BZ/(m)}\left(1-\zeta_m^{-r}\right)^{\rk N_r}}
\sum_{k\in\BZ/(m)}\zeta_m^{pk}\ar c_1(L_k,h^L)^{n-\rk N+1}
\bigg)
\\&&+\frac12T_{g}(X(\BC),E_\BC\otimes L_\BC^p)+\sum_{j=0}^\nm p^{\nm-j}\sum_{r\in\BZ/(m)}\zeta_m^{r p}\hat\kappa'_{r,j}.
\end{eqnarray*}
The asymptotic expansion of $T_g(X(\BC),E_\BC\otimes L_\BC^p)$ from Th.\ \ref{AsympEquiv} provides the remaining part of the right hand side in the statement.
\end{proof}
Since $c_1(L_\BC,h^L)>0$ implies that the free summand of $H^q(X,E\otimes L^p)$ vanishes for $q>0$ and $p>>0$, we could only assume that $H^q(X,E\otimes L^p)_{\rm Tors}=0$ holds for $q>0$ and $p>>0$.

\end{document}